 \def\NZQ{\mathbb}               % the font for N,Z,Q,R,C
 \def\NN{{\NZQ N}}
 \def\ZZ{{\NZQ Z}}
 \def\AA{{\NZQ A}}
 \def\frk{\mathfrak}               % font for "Fraktur"
 \def\mm{{\frk m}}
 \def\nn{{\frk n}}
 \def\ab{{\mathbf a}}
 \def\opn#1#2{\def#1{\operatorname{#2}}} % to make operators
 \opn\chara{char} \opn\length{\ell} \opn\pd{pd} \opn\rk{rk}
 \opn\projdim{proj\,dim} \opn\injdim{inj\,dim} \opn\rank{rank}
 \opn\depth{depth} \opn\grade{grade} \opn\height{height}
 \opn\embdim{emb\,dim} \opn\codim{codim}
 \opn\Tr{Tr} \opn\bigrank{big\,rank}
 \opn\superheight{superheight}\opn\lcm{lcm}
 \opn\trdeg{tr\,deg}%\emph{
 \opn\reg{reg} \opn\lreg{lreg} \opn\ini{in} \opn\lpd{lpd}
 \opn\size{size} \opn\sdepth{sdepth}
 \opn\link{link}\opn\fdepth{fdepth}\opn\lex{lex}
\opn\type{t}
 \opn\div{div} \opn\Div{Div} \opn\cl{cl} \opn\Cl{Cl}
 \opn\Spec{Spec} \opn\Supp{Supp} \opn\supp{supp} \opn\Sing{Sing}
 \opn\Ass{Ass} \opn\Min{Min}\opn\Mon{Mon}
 \opn\Ann{Ann} \opn\Rad{Rad} \opn\Soc{Soc}
 \opn\Im{Im} \opn\Ker{Ker} \opn\Coker{Coker} \opn\Am{Am}
 \opn\Hom{Hom} \opn\Tor{Tor} \opn\Ext{Ext} \opn\End{End}
 \opn\Aut{Aut} \opn\id{id}
 \opn\nat{nat}
 \opn\pff{pf}%   \pf exists already
 \opn\Pf{Pf} \opn\GL{GL} \opn\SL{SL} \opn\mod{mod} \opn\ord{ord}
 \opn\Gin{Gin} \opn\Hilb{Hilb}\opn\sort{sort}
 \opn\aff{aff} \opn
\opn\relint{relint} \opn\st{st}
 \opn\lk{lk} \opn\cn{cn} \opn\core{core} \opn\vol{vol}  \opn\inp{inp} \opn\nilpot{nilpot}
 \opn\link{link} \opn\star{star}\opn\lex{lex}\opn\set{set}
 \opn\width{wd}
\opn\Ap{Ap}
\opn\mult{m}
 \opn\gr{gr}
 \def\pot#1#2{#1[\kern-0.28ex[#2]\kern-0.28ex]}
 \opn\dirlim{\underrightarrow{\lim}}
 \opn\inivlim{\underleftarrow{\lim}}
 \let\to=\rightarrow
 \def\Implies{\ifmmode\Longrightarrow \else
         \unskip${}\Longrightarrow{}$\ignorespaces\fi}
 \def\implies{\ifmmode\Rightarrow \else
         \unskip${}\Rightarrow{}$\ignorespaces\fi}
 \def\iff{\ifmmode\Longleftrightarrow \else
         \unskip${}\Longleftrightarrow{}$\ignorespaces\fi}
 \newtheorem{Theorem}{Theorem}[section]
 \newtheorem{Lemma}[Theorem]{Lemma}
 \newtheorem{Corollary}[Theorem]{Corollary}
 \newtheorem{Proposition}[Theorem]{Proposition}
 \newtheorem{Remark}[Theorem]{Remark}
 \newtheorem{Example}[Theorem]{Example}
 \newtheorem{Problem}[Theorem]{Problem}
 \newtheorem{Conjecture}[Theorem]{Conjecture}
 \newtheorem{Question}[Theorem]{Question}
 \let\epsilon\varepsilon
 \let\kappa=\varkappa
 \def\qed{\ifhmode\textqed\fi
       \ifmmode\ifinner\quad\qedsymbol\else\dispqed\fi\fi}
 \def\textqed{\unskip\nobreak\penalty50
        \hskip2em\hbox{}\nobreak\hfil\qedsymbol
        \parfillskip=0pt \finalhyphendemerits=0}
 \def\dispqed{\rlap{\qquad\qedsymbol}}
 \opn\dis{dis}
 \def\pnt{{\raise0.5mm\hbox{\large\bf.}}}
 \opn\Lex{Lex}
\begin{document}
%\linenumbers
\title {Betti numbers for numerical semigroup rings}

 \author {Dumitru I.\ Stamate}

\address{Dumitru I. Stamate, ICUB/Faculty of Mathematics and Computer Science, University of Bucharest, Str. Academiei 14, Bucharest, Romania }
\email{dumitru.stamate@fmi.unibuc.ro}

\dedicatory{}

\begin{abstract}
We survey results related to the magnitude of the Betti numbers of numerical semigroup rings and of their tangent cones.
\end{abstract}

\thanks{}
\subjclass[2010]{Primary 13-02, 13D02,   13A30, 16S36 ; Secondary  13H10, 13P10}
%   13-02    	Research exposition (monographs, survey articles)
% 	13D02   	Syzygies, resolutions, complexes: in Commutative algebra
% 	13A30   	Associated graded rings of ideals (Rees ring, form ring), analytic spread and related topics
% 	16S36   	Ordinary and skew polynomial rings and semigroup rings

% 	13H10   	Special types (Cohen-Macaulay, Gorenstein, Buchsbaum, etc.)
% 	13P10   	Gröbner bases; other bases for ideals and modules (e.g., Janet and border bases)

\keywords{numerical semigroup rings, tangent cones, free resolution, Betti numbers, (almost) symmetric semigroup, shifted families of semigroups}

\maketitle
\setcounter{tocdepth}{1}
%\tableofcontents 
\section{Introduction}

A numerical semigroup $H$ is a subset of the set of  nonnegative integers $\NN$, closed under addition, containing $0$ (i.e. a monoid) and such that $|\NN\setminus H| <\infty$. The latter condition may be replaced by having $\gcd(h:h\in H)=1$.
The largest integer not in the numerical semigroup  $H$ is called its  Frobenius number, which we denote $F(H)$.

Given the positive integers $a_1, \dots, a_n$, the monoid they generate is
$$
\langle a_1, \dots, a_n \rangle= \left\{\sum_{i=1}^n k_i a_i: k_i \in \NN, i=1, \dots, n \right\}.
$$ 
Let $d=\gcd(a_1, \dots, a_n)$. Since the semigroups $\langle a_1, \dots, a_n\rangle$ and $\langle a_1/d, \dots, a_n/d\rangle$ 
are isomorphic, it is clear that the study of submonoids of $\NN$ reduces to the study of numerical semigroups.
It is easy to check that any monoid $H\subset \NN$ is finitely generated and that it has a unique minimal generating set that we denote  $G(H)$.  The embedding dimension of $H$ is defined as $\embdim(H)=|G(H)|$, the multiplicity of $H$ is $\mult (H)=\min G(H)$ and its width    is $\width (H)=\max G(H) - \min G(H)$.

Unless otherwise stated, throughout this paper $H$ denotes a numerical semigroup and  any semigroup is assumed to be a numerical semigroup.
 
Let $K$ be any field. The semigroup ring $K[H]$ is the $K$-subalgebra of the polynomial ring $K[t]$ generated by the monomials $t^h$ where $h\in H$. Assume $G(H)=\{a_1, \dots, a_n\}$. We consider the polynomial ring $S=K[x_1,\dots, x_n]$ endowed with the nonstandard grading induced by $H$, namely $\deg x_i= a_i$ for all $i$. Then the $K$-algebra map $\varphi:S \to K[H]$ letting
$\varphi(x_i)=t^{a_i}$ is a graded map. Its kernel $I_H=\ker \varphi$ is also called the toric ideal of $H$ since it is the ideal of relations of the toric algebra $K[H]$. More generally, any $1$-dimensional toric  ring can be viewed as a semigroup ring $K[H]$ with $H$ a numerical semigroup.

The elements of any minimal set of (homogeneous) generators of $I_H$ are the first syzygies of $K[H]$, and their count is the first Betti number of $K[H]$, denoted  $\beta_1(K[H])$. A minimal set of relations among the  first syzygies consists of what are called the second syzygies of $K[H]$, which are counted by $\beta_2(K[H])$. The process continues, and  it  produces  nontrivial syzygies until we reach the projective dimension of $K[H]$. 
The Betti sequence of $K[H]$ is the list $(\beta_0(K[H]), \beta_1(K[H]), \dots )$, where $\beta_0(K[H])=1$. The last nonzero Betti number, namely $\beta_{n-1}(K[H])$ is called the (Cohen-Macaulay) type of $K[H]$ (or of the semigroup $H$, see also Section \ref{sec:type}).

An algebra that is closely related to $K[H]$ and is relevant to our study  is its associated graded ring with respect to the maximal ideal $\mm=(t^h:  h\in H\setminus\{0\})K[H]$, i.e. $\gr_\mm K[H]= \oplus_{i\geq 0}  \mm^i/\mm^{i+1}$ .
 This algebra is also called the tangent cone of $K[H]$ (or of $H$) in resemblance with the geometric origin of the concept, see \cite[Chapter 5]{Eis}.

Minimal free resolutions of modules and their attached invariants are a classical and nevertheless constant source of questions
in algebraic geometry and in commutative algebra, see \cite{Eis-syz}, \cite{Peeva}. In this survey we collect some of the results and questions pertaining to these topics when the modules are $K[H]$ or $\gr_\mm K[H]$ for a numerical semigroup $H$. It is clear that  general results may be also applied to this setting, but on the other hand new tools become available and we get sharper statements when we focus on the type of rings we mentioned.

\medskip

We briefly outline the structure of this paper. In Section \ref{sec:algebra} we discuss arithmetic consequences (and at times characterizations) of the Gorenstein, complete intersection or Cohen-Macaulay property for $K[H]$ and $\gr_\mm K[H]$.
%In Section \ref{sec:exact} 
We present how to start the resolution of these algebras, namely how to (algorithmically) compute the toric ideal $I_H$ and its ideal of initial forms $I_H^*$.  Exact, but somehow opaque, formulas for the Betti numbers of $K[H]$ are given
in terms of topological and combinatorial data encoded in the squarefree divisor complexes of the elements in the semigroup, introduced in \cite{BH-semi}.

Fr\"oberg, Gottlieb and H\"aggkvist \cite{FGH} defined the type of the semigroup $H$ as the cardinality of its set of its pseudo-Frobenius numbers: $PF(H)=\{ x \in \ZZ \setminus H: x+h \in H \text{ for all } 0\neq h \in H \}$. 
In Section \ref{sec:type} we give a detailed proof of the fact that this notion of type coincides with the Cohen-Macaulay type of the semigroup ring $K[H]$. In higher embedding dimension the type is also unbounded, but we present  estimates for it when $H$  is in certain families of semigroups.

A recent result of Vu \cite{Vu}  is that if we bound the width of the semigroup, then the Betti numbers of $K[H]$ are bounded, too.
This was extended to the Betti numbers of $\gr_\mm K[H]$ by Herzog and the present author in \cite{HS}. 
These are  consequences of an eventual periodic behaviour of the Betti sequence of the toric ring and its tangent cone for semigroups in the same shifted family. We collect in Section \ref{sec:shifts} the most important periodicity-like results that have been spotted so far in shifted families of semigroups.
There is a growing interest in this topic, due to possible applications in faster implementations of the known algorithms for computing invariants of $K[H]$ or $H$, see \cite{ConawayAll} and \cite{ONeill-Pelayo-Apery}.

We say that the semigroup $H$ is obtained from the semigroup $L$  by a simple gluing if $H=\langle cL, \ell \rangle$ with $c, \ell >1$  coprime and $\ell \in L\setminus G(L)$. In Section \ref{sec:betti-gluings} we explain how the Betti numbers of $K[L]$ and $K[H]$ are related; in particular they have the same type.

\medskip

In Section \ref{sec:firstexamples} we discuss two families of semigroups  for whom the whole resolution of the associated toric ring is known: the complete intersections and those generated by an arithmetic sequence.
It has been known since Herzog's \cite{He-semi} that at most three binomials suffice to generate $I_H$ when $H$ is $3$-generated. On the other hand,  it is possible in this embedding dimension that $I_H^*$ have as many generators as one wants, see Section 
\ref{sec-atmost3}.

Bresinsky \cite{Bres}  produced the first examples  of $4$-generated semigroups where $\mu(I_H)$ is arbitrarily large.
In Section \ref{sec-edim4} we give a detailed proof of the computation of the Betti sequence of $K[H]$ and $\gr_\mm K[H]$ when 
$H$ is of Bresinsky's type, and also in a related family found by Arslan \cite{Arslan}.

For several families of $4$-generated semigroups the resolution of their toric ring is known, and we present these in Section \ref{sec:4continued}. Namely, when $H$ is  symmetric, pseudosymmetric or almost symmetric, or when it is generated by an almost arithmetic sequence.  It is still obscure and not yet completed (or even started) the list of possible Betti sequences of $\gr_\mm K[H]$ when $H$ is in one of the said families.

For the  background and the undefined terminology from  commutative algebra we refer to the monographs \cite{Eis}, \cite{BH}, and for an introduction to numerical semigroups   to \cite{NS-book} and \cite{Ramirez}.  The lecture notes of Fr\"oberg \cite{Froberg-Porto} from the conference in Porto in 2008 may also serve as an introduction to the topics we present.
Resolutions and toric ideals are rarely computed by hand, and we are happy that software like Singular \cite{Sing}, Macaulay2 \cite{M2} or CoCoA \cite{Cocoa} exists. In Singular, the package \texttt{toric.lib} implements several algorithms for computing toric ideals, which are particularly efficient for numerical semigroup rings since these are not standard graded. Presentations for numerical semigroups can also be computed via the \texttt{numericalsgps} package \cite{Num-semigroup} in GAP \cite{GAP} where many other semigroup routines are to be found.

Numerical semigroups occur in various  braches of mathematics, ranging from the study of singularities, number theory to coding theory. We hope this survey would be on interest to a larger audience, especially since there   is a growing number of recent publications dealing with resolutions or Betti numbers  for numerical semigroup rings.
The  results are scattered in various places, as witnessed by the list of references.
We tried to be comprehensive, but the outcome is of course limited by our  knowledge.

For most of the results we present  we preferred not to include a full proof, but rather point to one, if available. 
A lot of examples are included, and it is here where we insisted on giving details,  at places fixing some gaps in the literature.

\section{Algebraic warm-up}
\label{sec:algebra}

In this section we give some basic algebraic facts about semigroup rings, their minimal resolution, and we recall some terminology.

Let $K$ be any field, $H$   a numerical semigroup minimally generated by $a_1, \dots, a_n$ and $S=K[x_1, \dots, x_n]$   the polynomial ring over $K$ in the indeterminates $x_1, \dots, x_n$.  
On $S$ we consider the nonstandard grading given by the semigroup $H$ by letting $\deg x_i= a_i$ for $i=1, \dots, n$.
We denote $I_H$ the kernel of the $K$-algebra map $\phi:S \to K[H]$ letting $\phi(x_i)=t^{a_i}$  for $i=1, \dots, n$.
The algebra $K[H] \cong S/I_H$ is a $1$-dimensional domain,  hence by the Auslander--Buchsbaum formula  (\cite[Theorem 1.3.3]{BH}) we have that
$$
\projdim_S K[H]=\height I_H=n -1.
$$ 

Let $f_1,\dots, f_r$ be a minimal system of generators for $I_H$, which are homogeneous with respect to the grading on $S$ induced by the semigroup $H$. The relations among them are captured by the kernel of the $S$-linear map $\varphi_1:F_1=\bigoplus_{i=1}^r Se_i \to S$ letting $\varphi(e_i)= f_i$ for $i=1, \dots, r$. 
To make this map homogeneous we assign $\deg (e_i)=\deg (f_i)$ for $i=1, \dots, r$.
 
This process continues and we obtain a chain complex of free $S$-modules of finite rank
\begin{equation*}
\label{eq:f}
\mathbb{F} : \quad  0\to F_{n-1} \stackrel{\varphi_{n-1}}{\rightarrow} F_{n-2} \to \dots \to F_1 \stackrel{\varphi_{1}}{\rightarrow} F_0=S\to 0,
\end{equation*}
which is exact in positive  homological degrees, it has $H_0(\mathbb{F})\cong S/I_H$, and the maps have the property that
$\varphi_{i}(F_i) \subset \nn F_{i-1}$ for all $i=1, \dots, n-1$. Here $\nn$ denotes the maximal homogeneous ideal of $S$.
One says that $\mathbb{F}$ is a minimal free $S$-resolution of $S/I_H$. Such an $\mathbb{F}$ is unique (only) up to
an isomorphism of chain complexes,
 hence we can define the $i^{th}$ Betti number of $K[H]$ as 
$$\beta_i(K[H])=\rank F_i=\dim_K \Tor_i^S(K[H],K)  \text{ for all } i,$$ 
and  this number does not depend on the chosen minimal free resolution of $K[H]$.
The Betti sequence of $K[H]$ is the list $(\beta_0(K[H]), \beta_1(K[H]), \dots )$. Clearly, $\beta_0(K[H])=1$ and $\beta_1(K[H])=\mu(I_H)$ the minimal number of generators for $I_H$.

 Assume $G(H)=\{ a_1, \dots, a_n\}$ and $S=K[x_1, \dots, x_n]$. Then $I_H$ is the binomial ideal
$$
I_H=\left(x^u-x^v:u,v\in \NN^n, \sum_{i=1}^n u_i a_i= \sum_{i=1}^n v_ia_i\right),
$$
where   for $u=(u_1, \dots, u_n)$ we let $x^u=x_1^{u_1}\cdots x_n^{u_n}$.
 The toric  ideal $I_H$  can be computed via elimination in the extended polynomial ring $S[t]$:
$$
I_H=(x_1-t^{a_1}, \dots, x_n-t^{a_n})S[t]\cap S,
$$
or via saturation, as follows.
Let $u_{(1)}, \dots, u_{(n-1)}$ be a $\ZZ$-basis for the subgroup $\{ (u_1, \dots, u_n)\in \ZZ^n : \sum_{i=1}^n u_ia_i =0\}$.
Then $I_H$ is the saturation  $$I_H=I_L: (x_1\cdots x_n)^\infty$$ of the lattice ideal 
$$
I_L=\left(x^{u^+_{(i)}}-x^{u^{-}_{(i)}}: i=1, \dots, n-1 \right).
$$
Here, for any vector $u$,  by   $u^+$ and $u^-$ we denote  the unique vectors with nonnegative  entries having disjoint support 
such that $u=u^+ - u^-$. We refer to  \cite[Chapters 4, 12]{Sturmfels} 
 for detailed proofs and further algorithms.

 When $\embdim(H) =2$, $I_H$ is  a principal ideal. If $\embdim(H) =3$, Herzog \cite{He-semi} showed that  $I_H$ can be generated by at most $3$ binomials, see also \cite{Denham}. For each $1\leq i\leq 3$, we look at the smallest positive multiple $c_ia_i$ which is in the semigroup generated by the other two generators of $H$, and this gives a binomial generator for $I_H$.  For instance, when $H=\langle 6,7,10 \rangle$ we may write 
\begin{eqnarray*}
4\cdot \phantom{1} 6 &=& 2\cdot 7+1\cdot 10, \\
4\cdot \phantom{1} 7 &=& 3\cdot 6+1\cdot 10, \\
2\cdot 10 &=& 1\cdot 6+ 2\cdot \phantom{1}7,
\end{eqnarray*}  and this gives 
\begin{equation}
\label{eq:3}
I_H=(x^4-y^2z, y^4-x^3z, z^2-xy^2).
\end{equation} 

In higher embedding dimension, it is more difficult in general to establish a system of generators for $I_H$ without using specialized software.

The maps in the resolution $\mathbb{F}$ are homogeneous with respect to the  grading induced by the semigroup, and 
this is reflected in the decomposition of the Betti numbers as sum of their multigraded parts:
$$
\beta_i(K[H])=\sum_{\lambda \in H} \beta_{i,\lambda} (K[H]).
$$

These summands can be expressed in terms of combinatorial and topological data. 
In \cite{BH-semi} and in \cite{CM} for any $\lambda$ in $H$, the  squarefree divisor complex $\Delta_\lambda$
 is  defined as the simplicial complex on the vertex set $\{1, \dots, n\}$ where $\{i_1, \dots, i_r\}$
 is a face of $\Delta_\lambda$  if $\lambda-a_{i_1}-\dots -a_{i_r} \in H$.
Here is a way to make use of its reduced homology groups.

\begin{Theorem}(Bruns-Herzog \cite[Proposition 1.1]{BH-semi}, Campillo-Marijuan \cite[Theorem 1.2]{CM})
\label{thm: betti-simplicial} In the above notation,
$$\beta_{i, \lambda} (K[H])= \dim_K \widetilde{H}_{i-1}(\Delta_\lambda; K)$$  for all $i>0$ and $\lambda$ in $H$.
\end{Theorem}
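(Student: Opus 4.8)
The plan is to compute $\Tor_i^S(K[H],K)$ by means of a free resolution of $K$ over $S$ that is well adapted to the $H$-grading, namely the Koszul complex $\mathbb{K}_\bullet$ on the variables $x_1,\dots,x_n$. Since $\Tor_i^S(K[H],K)$ can be computed from either a resolution of $K[H]$ or a resolution of $K$, I would use the latter: $\Tor_i^S(K[H],K) = H_i(\mathbb{K}_\bullet \tensor_S K[H])$. The complex $\mathbb{K}_\bullet \tensor_S K[H]$ is the Koszul complex $\mathbb{K}_\bullet(t^{a_1},\dots,t^{a_n}; K[H])$ on the elements $t^{a_i} \in K[H]$, and it is naturally $H$-graded once we place the generator $e_{j_1}\wedge\dots\wedge e_{j_r}$ in degree $a_{j_1}+\dots+a_{j_r}$ and note that multiplication by $t^{a_i}$ raises degree by $a_i$. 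Thus it suffices to identify, for each fixed $\lambda\in H$, the degree-$\lambda$ strand of this Koszul complex and to recognize its homology.

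The key step is to read off the $\lambda$-graded piece explicitly. A basis element $t^m \, e_{j_1}\wedge\dots\wedge e_{j_r}$ of $(\mathbb{K}_r \tensor K[H])_\lambda$ corresponds to $m\in H$ with $m + a_{j_1}+\dots+a_{j_r} = \lambda$, i.e. to a face $\{j_1,\dots,j_r\}$ of the squarefree divisor complex $\Delta_\lambda$ (recalling the definition: $\{j_1,\dots,j_r\}$ is a face precisely when $\lambda - a_{j_1}-\dots-a_{j_r}\in H$), and for such a face the element $m$ is uniquely determined. Hence $(\mathbb{K}_\bullet \tensor K[H])_\lambda$ is, as a complex of $K$-vector spaces, spanned by the faces of $\Delta_\lambda$ in each homological degree. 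One then checks that the Koszul differential $e_{j_1}\wedge\dots\wedge e_{j_r} \mapsto \sum_k (-1)^{k-1} t^{a_{j_k}} e_{j_1}\wedge\dots\widehat{e_{j_k}}\dots\wedge e_{j_r}$ carries the basis element indexed by a face $\sigma$ exactly to the alternating sum of its codimension-one subfaces (the surviving terms being precisely those subsets that remain faces of $\Delta_\lambda$, which is automatic since $\Delta_\lambda$ is a simplicial complex). Therefore the $\lambda$-strand of $\mathbb{K}_\bullet \tensor K[H]$ is isomorphic, as a complex, to the augmented (reduced) chain complex of $\Delta_\lambda$ over $K$, with a shift in homological degree by one coming from the fact that a single vertex $\{i\}$ sits in Koszul homological degree $1$ while it sits in simplicial degree $0$.

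Taking homology then gives $\beta_{i,\lambda}(K[H]) = \dim_K H_i\big((\mathbb{K}_\bullet\tensor K[H])_\lambda\big) = \dim_K \widetilde{H}_{i-1}(\Delta_\lambda;K)$ for all $i>0$ and all $\lambda\in H$, which is the assertion. One small bookkeeping point worth care is the behaviour in homological degree $0$ and the role of the reduced (as opposed to unreduced) homology: the degree-$\lambda$ part of $\mathbb{K}_0\tensor K[H] = K[H]$ is one-dimensional (spanned by $t^\lambda$, since $\lambda\in H$) and corresponds to the empty face of $\Delta_\lambda$, which is exactly the augmentation term that converts ordinary simplicial homology into reduced homology; this is why the formula produces $\widetilde H$ and why $\beta_{0,\lambda}$ is accounted for separately ($\beta_0(K[H])=1$ concentrated in degree $0$). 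The main obstacle, such as it is, is simply being careful with signs and the degree shift so that the identification of the two differentials is exact on the nose; everything else is a direct unwinding of definitions.
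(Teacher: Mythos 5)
Your argument is correct and is precisely the standard proof of this result: the paper itself only cites \cite{BH-semi} and \cite{CM}, and the Bruns--Herzog proof is exactly this identification of the degree-$\lambda$ strand of the Koszul complex $\mathbb{K}_\bullet(x_1,\dots,x_n)\otimes_S K[H]$ with the augmented chain complex of $\Delta_\lambda$ shifted by one. Your bookkeeping (uniqueness of $m$ for each face, automatic survival of all boundary terms because $H$ is closed under addition, and the empty face accounting for the augmentation/reduced homology) is exactly right.
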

 
The reduced homology modules of $\Delta_\lambda$ may depend on the characteristic of the field $K$, hence the same is true 
for the Betti numbers of $K[H]$. However, the number of connected components of $\Delta_\lambda$, which is given by
$\dim_K H_0(\Delta_\lambda; K)$ does not depend on $K$ and thus $\beta_1(K[H])=\mu(I_H)$ depends only on $H$. 
We refer to \cite{BH-semi} for  more results on this direction.

\medskip

The tangent cone of $K[H]$ (or of $H$)   is the associated graded ring  of $K[H]$ with respect to the maximal ideal $\mm=(t^h:  h\in H\setminus\{0\})K[H]$, namely
$$
\gr_\mm K[H]= K[H]/\mm \oplus \mm/\mm^2 \oplus \mm^2/\mm^3 \oplus \cdots.
$$
  It is a standard graded $K$-algebra by letting $\mm^i/\mm^{i+1}$ be its homogeneous component of degree $i$. This grading is the one we shall further use for $\gr_\mm K[H]$, unless otherwise specified.

For any nonzero $f\in S$, its initial form $f^*$ is the homogeneous component  (with respect to the standard grading)  of smallest degree.
 For any ideal $I$ in $S$ we denote $I^*=(f^*: 0\neq f\in I)$ the ideal of initial forms.
If $f_1, \dots, f_r \in I$ such that  $I^*=(f_1^*, \dots, f_r^*)$ one says that $\{ f_1, \dots, f_r\}$ is a standard basis of $I$.
Moreover, in that situation the polynomials $f_1, \dots, f_r$ generate $I$.

The ideal $I^*$ is obtained from  a set of generators $I=(f_1, \dots, f_r)$ as follows. Let $F_i$ be the homogenization of $f_i$ with respect to a new variable $x_0$, for $i=1, \dots, r$, and assume $G_1, \dots, G_s$ is a Gr\"obner basis for the ideal $(F_1, \dots, F_r)\subset S[x_0]$ with respect to a term order that refines the partial order by degree in $x_0$. If we set $g_i=G_i(1, x_1, \dots, x_n)$ for $i=1, \dots, s$ then 
$I^*_H= (g_1, \dots, g_s)$, see \cite[Proposition 15.28]{Eis} or \cite[Proposition 3.25]{EH} for a proof.
 
The relevance to us of this construction stems from the fact that $\gr_\mm K[H] \cong  S/I_H^*$.

One can verify that for $H=\langle 6,7,10 \rangle$ the three generators listed in \eqref{eq:3} are a standard basis, hence $I_H^*=(y^2z, y^4-x^3z, z^2)$.

\medskip

General deformation arguments (see \cite{Eis}) prove that 
$$
\beta_i(K[H]) \leq \beta_i(\gr_\mm K[H]) \text{ for all } i,
$$
which shows that when searching for upper bounds for the Betti numbers of $K[H]$, we may refer to the Betti numbers of its tangent cone. In practice, the latter are easier to compute due to the standard grading on $I_H^*$.
When $K[H]$ and $\gr_\mm K[H]$ have the same Betti sequence, one says that $K[H]$ (or $H$)  is of homogeneous type, following  the terminology of \cite{HRV}. In \cite[Theorem 3.17]{JZ} the authors give a sufficient condition for $H$ to be of homogeneous type.

Both algebras $K[H]$ and $\gr_\mm K[H]$  have Krull dimension one, but while the former is a domain, the latter is not reduced and 
$\depth \gr_\mm K[H] \leq 1$. By the Auslander-Buchsbaum formula,
$$\embdim(H)-1 \leq \projdim \gr_\mm K[H] \leq \embdim(H).$$  
The case  $\embdim(H)-1 = \projdim \gr_\mm K[H]$ is equivalent to  $\depth \gr_\mm K[H]=1$ (hence, by definition, $\gr_\mm K[H]$ is Cohen-Macaulay), i.e. there exists a regular element of positive degree in $\gr_\mm K[H]$. In our setting, this is equivalent 
to   $\widehat{t^{\mult(H)}}$ being regular on $\gr_\mm K[H]$.

Quite a bit of work (e.g. \cite{Katsabekis}, \cite{AKN}, \cite{Arslan}, \cite{AMS}, \cite{He-reg}, \cite{RV}, \cite{Garcia}, \cite{BarF}, \cite{HS}) 
was  directed towards finding  criteria to test if $\gr_\mm K[H]$ is Cohen-Macaulay for an arbitrary numerical semigroup, 
partly motivated by the fact that  in that situation the Hilbert function can be computed easier and it is non-decreasing.

The following hierarchy of rings is known: complete intersection $\implies$ Gorenstein  $\implies$ Cohen-Macaulay. We explain what these conditions mean for our algebras of interest.

The algebra $K[H]$ or $\gr_\mm K[H]$ is a complete intersection (CI for short) if its defining ideal can be generated by the minimum number of  polynomials allowed by Krull' s theorem, namely by $\height I_H=\height I_H^*= \embdim H-1$ elements. 
The CI property for  $K[H]$ and $\gr_\mm K[H]$ does not depend on the field $K$, but on some arithmetic  conditions among the generators of $H$.
Delorme \cite{Delorme} proved that $K[H]$  is CI if and only if the generators of $H$ can be obtained recursively via a process which is nowadays called gluing. D'Anna, Micale and Sammartano  \cite[Theorem 3.6]{DAMS} characterized the CI property for $\gr_\mm K[H]$ using the Ap\'ery set of $H$. 

One says that $K[H]$  or $\gr_\mm K[H]$  is a Gorenstein ring if it is Cohen-Macaulay and  its type equals one.
The Cohen-Macaulay condition always holds for $K[H]$.  
 Kunz \cite{Kunz} showed that $K[H]$ is Gorenstein  if and only if the semigroup $H$ is symmetric, i.e. for all $x\in \ZZ$ either $x\in H$ or $F(H)-x \in H$.
The Gorenstein property for $\gr_\mm K[H]$ was characterized by Bryant in \cite[Theorem 3.14]{Bryant}.
One special feature of Gorenstein algebras is that their Betti sequence is symmetric, see \cite[Theorem 3.3.7 (a), Corollary 3.3.9]{BH}.

\section{The type of a numerical semigroup}
 \label{sec:type}

The  Ap\'ery set  of the semigroup $H$ with respect to a nonzero integer $a$ in $H$ is  
$$
\Ap(H,a)=  \{ x\in H: x-a \notin H\}.
$$
 Clearly, its elements give different remainders modulo $a$ and   $|\Ap(H,a)|=a$ . The Ap\'ery set of $H$ is defined as $\Ap(H, \mult(H))$.

The pseudo-Frobenius numbers of   $H$ are the elements in
$$
PF(H)=\{ x\in \ZZ \setminus H: x+h\in H \text{ for all } h\in H, h>0 \}.
$$

Fr\"oberg, Gottlieb and H\"aggkvist in \cite{FGH} define the  type of $H$ by $\type (H)=|PF(H)|$.

In the following we explain why this purely arithmetic invariant equals the type of the semigroup ring $K[H]$, where $K$ is any field.
We recall some algebraic terminology.

The { type of  a Cohen-Macaulay local ring   $(R, \mm)$ is  $\type(R)=\dim_K \Ext_R^d(K, R)$, where $K=R/\mm$ and $d$ is the Krull dimension of $R$ .
Moreover, it $x_1, \dots, x_d \subset \mm$ is an $R$-regular sequence  then $\type(R)=\dim_K \Hom_R(K, R/(x_1, \dots, x_d))$, 
see \cite[Lemma 1.2.19]{BH}.
In case $R=A/I$ with $A$ a regular local ring and $I$ an ideal in $A$, then $\type(R)=\dim_K \Tor_d^A(K,R)$, 
see \cite[Lemma 3.5]{AoyamaGoto}. This result means that $\type(R)$ is the rank of the last nonzero module in the minimal free resolution of $R$ over $A$. 

The type of a Cohen-Macaulay ring $R$ is defined as the maximum of $\type(R_\mathfrak{p})$, 
where $\mathfrak{p}$ ranges in the set of maximal ideals of $R$.

\begin{Theorem}
\label{thm:sametype}
Let $K$ be any field and $H$ be any numerical semigroup. Then $\type(H)=\type(K[H])$.
\end{Theorem}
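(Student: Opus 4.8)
The plan is to use the algebraic characterization of type recalled just before the statement: since $K[H]\cong S/I_H$ is a one-dimensional Cohen-Macaulay ring with $S$ a (graded) polynomial ring, the type of $K[H]$ equals $\dim_K \Tor^S_{n-1}(K,K[H])=\beta_{n-1}(K[H])$, the rank of the last free module in the minimal free resolution. Alternatively, and this is the route I would actually take, pick a homogeneous $K[H]$-regular element — the natural choice is $t^a$ where $a=\mult(H)$, or more simply any $a\in G(H)$ — and reduce modulo it: by $\cite[\text{Lemma }1.2.19]{BH}$ we have $\type(K[H])=\dim_K \Hom_{K[H]}(K, K[H]/t^aK[H])=\dim_K \Soc(K[H]/t^aK[H])$. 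The first step is therefore to identify $K[H]/t^aK[H]$ explicitly: as a $K$-vector space it has basis $\{t^w : w\in \Ap(H,a)\}$, because the residues of $H$ modulo the ideal $t^aK[H]$ are exactly the monomials $t^w$ with $w-a\notin H$. So $K[H]/t^aK[H]$ is an $a$-dimensional local Artinian $K$-algebra with monomial basis indexed by $\Ap(H,a)$.

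The second step is to compute the socle of this Artinian quotient and match it with $PF(H)$. The maximal ideal of $K[H]/t^aK[H]$ is spanned by $\{t^w : 0\neq w\in\Ap(H,a)\}$, and a basis element $t^w$ (with $w\in\Ap(H,a)$) lies in the socle iff $t^w\cdot t^h\in t^aK[H]$ for every $h\in H$ with $h>0$, i.e. iff $w+h\in H$ and $w+h-a\in H$ — equivalently iff $w+h-a\in H$ for all $h>0$ in $H$. I would then check that the assignment $w\mapsto w-a$ is a bijection between $\{w\in\Ap(H,a): w+h-a\in H\ \forall h>0 \text{ in }H\}$ and $PF(H)$. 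Indeed if $w\in\Ap(H,a)$ has this property then $x:=w-a\notin H$ (that is the defining property of $\Ap(H,a)$) while $x+h\in H$ for all $h>0$, so $x\in PF(H)$; conversely if $x\in PF(H)$ then $x\notin H$ but $x+a\in H$ (as $a\in H$, $a>0$), and $x+a-a=x\notin H$ shows $x+a\in\Ap(H,a)$, and $(x+a)+h-a=x+h\in H$ for $h>0$, so $w:=x+a$ is in the socle-indexing set. One must also confirm the two nonvanishing edge cases don't interfere — e.g. that $0\in\Ap(H,a)$ never contributes to the socle (it contributes the class of $1$, which is not in the maximal ideal, so this is automatic) — giving $\dim_K\Soc(K[H]/t^aK[H])=|PF(H)|=\type(H)$.

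The third step is to make sure the socle computation really computes the type, which requires $t^a$ to be a nonzerodivisor on $K[H]$: this is immediate since $K[H]$ is a one-dimensional domain and $t^a\neq 0$. One should also note that the grading causes no trouble: $t^a$ is homogeneous of positive degree, all the modules in sight are finitely generated graded $K[H]$-modules, and $\Hom$ and socle may be computed in the graded category, so the formula $\type(R)=\dim_K\Hom_R(K,R/(x))$ applies verbatim. Finally, since $K[H]$ has a unique graded maximal ideal and is the localization relevant to its type as a (non-local) ring, the "global" type defined as a maximum over maximal ideals agrees with this local computation. The only subtle point, and the place where care is needed, is the bijection in step two between socle generators and pseudo-Frobenius numbers, together with checking it is characteristic-free — but since the whole argument only uses $K$-dimensions of $\Hom$'s between monomial-basis modules, independence of $\chara K$ is transparent. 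I expect the main obstacle to be purely expository: presenting the $\Ap$-set description of $K[H]/t^aK[H]$ and the socle-versus-$PF(H)$ dictionary cleanly, rather than any genuine difficulty.
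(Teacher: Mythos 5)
Your proposal is correct and follows essentially the same route as the paper: reduce modulo the regular element $t^a$, identify the quotient with the Ap\'ery set $\Ap(H,a)$, and exhibit the bijection (a shift by $a$) between the socle basis and $PF(H)$ — the paper's set $B$ is exactly your socle-indexing set, written as $h+PF(H)$. The only cosmetic difference is that the paper first passes to the localization and the power series ring $K[|H|]$ (via Aoyama--Goto and flatness) before doing this computation, whereas you carry it out directly in the graded category; both justifications are standard and equivalent here.
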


\begin{proof}
The ring $R=K[H]$ is positively graded by setting $\deg t^h=h$ for all $h$ in $H$. 
$R$ has a unique maximal graded ideal $\mm$, hence by \cite[Theorem pp. 75]{AoyamaGoto} we get that $\type(K[H])=\type(K[H]_\mm)$. 
The ring map $K[H]_\mm \to K[|H|]$ is flat and its fiber is the field $K$, hence  it follows from \cite[Proposition 1.2.16.(b)]{BH} that $\type(K[H]_\mm)=\type(K[|H|])$.

Let $h\in H$, $h>0$.  Since $t^h$ is a regular element on $K[|H|]$ we have that 
$\type(K[|H|])=\dim_K \Hom_{K[|H|]}(K, K[|H|]/(t^h) )$. 
It is an easy exercise to check that a $K$-basis for $\Hom_{K[|H|]}(K, K[|H|]/(t^h) )$ is given by the residue classes $\widehat{t^x}$
where $x$ ranges in the set 
$$
B=\{x\in \Ap(H,h) : x \neq h \text{ and } x+g\in h+H \text{ for all } g\in H\setminus{0} \}. 
$$
We also leave it to the reader to check the equality of sets $B=h+ PF(H)$. Therefore, 
$$
\type(H)=|PF(H)|=|B|=\type(K[|H|])= \type(K[H]),
$$
which finishes the proof.
\end{proof}

From the arithmetic definition of the type of $H$ one gets the following inequalities.

\begin{Theorem} (Fr\"oberg, Gottlieb, H\"aggkvist \cite{FGH}) Let $H$ be a numerical semigroup. Then
\begin{enumerate}
\item [(i)] $\type(H)< \mult(H)$;
\item [(ii)] if $\embdim(H) \leq 3$ then $\type(H)\leq 2$;
\item [(iii)]  $(\type(H)+1) \cdot n(H) \geq F(H)+1$, where $n(H)$ is the number of elements in $H$ which are smaller that its Frobenius number $F(H)$.
\end{enumerate}
\end{Theorem}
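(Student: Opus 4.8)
The plan is to prove the three parts essentially independently, the last one being the only one that requires an idea.

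\textbf{Part (i).} First I would record the harmless reformulation: writing $G(H)=\{a_1,\dots,a_n\}$, an integer $x$ lies in $PF(H)$ if and only if $x\notin H$ and $x+a_i\in H$ for every $i$; the forward direction is trivial, and for the converse any $h\in H\setminus\{0\}$ can be written $h=a_i+h'$ with $h'\in H$, so $x+h=(x+a_i)+h'\in H$. Set $m=\mult(H)$. Then $x\mapsto x+m$ maps $PF(H)$ injectively into $\Ap(H,m)$, since $x\in PF(H)$ forces $x+m\in H$ while $(x+m)-m=x\notin H$. As $|\Ap(H,m)|=m$, this gives $\type(H)\le m$; to upgrade to strict inequality I would check that $0\in\Ap(H,m)$ is not hit, i.e. that $-m\notin PF(H)$. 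Indeed, assuming $H\neq\NN$ there is a minimal generator $a\neq m$, and then $a>m$ with $a-m\notin H$ (otherwise $a=(a-m)+m$ would express $a$ without using $a$ itself, contradicting minimality); hence $-m+a\notin H$ and $-m\notin PF(H)$, so $\type(H)\le m-1<\mult(H)$.

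\textbf{Part (ii).} Here I would simply translate via Theorem~\ref{thm:sametype}: $\type(H)=\type(K[H])=\beta_{n-1}(K[H])$, the rank of the last free module in the minimal $S$-resolution, where $n=\embdim(H)$. If $n\le 2$ the ring $K[H]$ is $K[t]$ or a hypersurface, so $\beta_{n-1}=1$. If $n=3$, then $\projdim_S K[H]=2$ by Auslander--Buchsbaum, so by the Hilbert--Burch theorem (or just by comparing ranks in the length-$2$ resolution of the torsion $S$-module $K[H]$, which gives $1-\beta_1+\beta_2=0$) one has $\beta_2=\beta_1-1$, and Herzog's bound $\beta_1=\mu(I_H)\le 3$ recalled in Section~\ref{sec:algebra} yields $\type(H)=\beta_2\le 2$.

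\textbf{Part (iii).} Put $F=F(H)$ and let $g(H)=|\NN\setminus H|$ be the genus. Splitting $\{0,1,\dots,F\}$ into the $n(H)$ elements of $H$ below $F$ and the $g(H)$ gaps (all gaps being $\le F$), one gets $F+1=n(H)+g(H)$, so the claim is equivalent to
$$
g(H)\ \le\ \type(H)\cdot n(H).
$$
I would prove this by constructing an injection from the set of gaps into $PF(H)\times\bigl(H\cap[0,F)\bigr)$. The key lemma is: \emph{for every gap $x$ there is $f\in PF(H)$ with $f-x\in H$.} To see this, consider the set $\{y\notin H: y-x\in H\}$; it is nonempty ($x$ belongs to it), and it consists of gaps only, hence is bounded above by $F$ and finite. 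Let $f$ be its largest element. If $f\notin PF(H)$ then $f+h\notin H$ for some $h\in H\setminus\{0\}$, but $(f+h)-x=(f-x)+h\in H$ and $f+h>f$, contradicting maximality. So $f\in PF(H)$ and $h_x:=f-x\in H$; moreover $h_x=f-x\le F-1<F$ since $f\le F$ and $x\ge 1$, so $h_x\in H\cap[0,F)$. The assignment $x\mapsto(f,h_x)$ is injective because $x=f-h_x$ is recovered from the image, so $g(H)\le|PF(H)|\cdot|H\cap[0,F)|=\type(H)\cdot n(H)$, and rearranging gives (iii).

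\textbf{Main obstacle.} The only non-routine point is the covering lemma in part (iii): for each gap one must produce a pseudo-Frobenius number "dominating'' it through an element of $H$, \emph{and} certify that the associated element of $H$ is genuinely below $F$ so that it is counted by $n(H)$; the maximality trick disposes of both at once. Minor care is also needed for the degenerate semigroup $H=\NN$, where $F=-1$ and $n(H)=g(H)=0$, so (iii) reads $0\ge 0$ and (ii) is trivial, while in (i) this is precisely the one case where strict inequality fails unless one assumes $H\neq\NN$.
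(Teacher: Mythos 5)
Your proof is correct in all three parts. Note that the survey itself gives no proof of this theorem (it only cites [FGH]), so there is nothing internal to compare against; your arguments are essentially the standard ones. For (i), the injection $PF(H)\hookrightarrow \Ap(H,\mult(H))$, $x\mapsto x+\mult(H)$, together with the observation that $0\in\Ap(H,\mult(H))$ is missed, is exactly right, and you are correct to flag that strictness fails for $H=\NN$ (where $\type=\mult=1$); this degenerate case is implicitly excluded in [FGH]. For (ii) you take the algebraic route — $\type(H)=\beta_2(K[H])=\mu(I_H)-1\le 2$ via the rank count in the length-two resolution and Herzog's bound — which is cleaner than, though logically dependent on, Theorem~\ref{thm:sametype} and the material of Section~\ref{sec:algebra}; the original argument in [FGH] is purely arithmetic, but yours is perfectly valid and consistent with the Betti sequences $(1,2,1)$ and $(1,3,2)$ recorded in Section~\ref{sec-atmost3}. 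For (iii), the reduction to $g(H)\le\type(H)\cdot n(H)$ and the covering lemma (every gap $x$ satisfies $f-x\in H$ for some $f\in PF(H)$, obtained by maximality) are correct, including the check that $h_x=f-x<F(H)$ so that it is genuinely counted by $n(H)$, and the injectivity of $x\mapsto(f,h_x)$.
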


By a result of Kunz \cite{Kunz} a semigroup is symmetric if and only if its type equals $1$.
When $H$ is pseudosymmetric, its type  equals 2, but the converse is not true. 
For instance  $H=\langle 5,6,7 \rangle$   has $PF(H)=\{8,9\}$.

In embedding dimension at least $4$ there is no absolute upper bound on the type of the semigroup, as the following examples show; see also Section \ref{sec-edim4}.  Historically,  Backelin was the first to produce 4-generated semigroups whose type is arbitrarily large, see the next example.

\begin{Example} (Backelin, \cite[Example  pp. 75]{FGH})
{\em
Given the integers $n\geq 2$ and $r\geq 3n+2$, set $s=r(3n+2)+3$ and
$$
H=\langle s, s+3, s+3n+1, s+3n+2 \rangle.
$$
It is proven in \cite{FGH} that $\type(H)\geq 2n+2$. However, it is wrongly claimed in  \cite{FGH} that $\type(H)=2n+3$. Computations with Singular (\cite{Sing}) and GAP (\cite{GAP}, \cite{Num-semigroup}) indicate that 
both $K[H]$ and $\gr_\mm K[H]$ have the Betti sequence $(1, 3n+4, 6n+5, 3n+2)$.
}
\end{Example}
 
For the next example the type was computed  by Cavaliere and Niesi in \cite{CN}. They also show that for these semigroups
 the associated projective monomial curve  is Cohen-Macaulay, which is not the case for Bresinsky's semigroups discussed in detail  in Section \ref{sec:bres-semi}.
\begin{Example}
\label{ex:cavalieri-niesi}
 (\cite[(3.4)]{CN})
{\em For $a\geq 3$ let 
$$
H_a=\langle a^2-a, a^2-a+1, a^2-1, a^2 \rangle.
$$
Then $\type(H_a)=2a-4$. Moreover, based on computations with Singular  \cite{Sing}  we claim that the Betti sequences for $K[H_a]$ and $\gr_\mm K[H_a]$ are the same $(1,2a-2, 4a-7, 2a-4)$. 
%% actually, the homogenization of $ H_a$ has the same Betti sequence.
}
\end{Example}

Combining  Example \ref{ex:cavalieri-niesi} and  Proposition \ref{prop:betti-arslan} we see that any positive integer  appears as the type of a $4$-generated numerical semigroup. By the technique of gluing, 
such examples may be constructed  for any  embedding dimension larger than four, see Corollary \ref{cor:type-gluing}.

When $H$ is generated by an (almost) arithmetic sequence, there are sharp bounds for the type in terms of the embedding dimension, see also Eq. \eqref{eq:betti}. 

\begin{Proposition}
\begin{enumerate}
\item [(i)] (Tripathi, \cite{Tripathi})  If $H$ is generated by an arithmetic sequence then $\type(H)\leq \embdim(H) -1$.
\item [(ii)] (Garc\'ia Marco, Ram\'irez Alfons\'in, R\o dseth, \cite[Theorem 3.1]{GRR}) If $H$ is generated by an almost arithmetic sequence then
 $\type(H) \leq 2(\embdim(H)-2
)$.
\end{enumerate}
\end{Proposition}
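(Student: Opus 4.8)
The plan is to reduce both bounds to a count of maximal elements in an Apéry poset. For a generator $h\in H\setminus\{0\}$, order $\Ap(H,h)$ by $x\preceq y$ if and only if $y-x\in H$. Using the decomposition $H=\Ap(H,h)\sqcup(h+H)$ together with the identification $B=h+PF(H)$ from the proof of Theorem~\ref{thm:sametype}, a direct check shows that $B$ is exactly the set of $\preceq$-maximal elements of $\Ap(H,h)$: if $x\preceq x'$ with $x'\in\Ap(H,h)$ and $x\neq x'$, then $x'=x+(x'-x)\notin h+H$, so $x\notin B$; conversely a $\preceq$-maximal $x$ cannot satisfy $x+g\in\Ap(H,h)$ for any $g\in H\setminus\{0\}$ (else $x\prec x+g$ in $\Ap(H,h)$), hence $x+g\in h+H$ for all such $g$ and $x\in B$. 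Consequently $\type(H)=|PF(H)|$ equals the number of $\preceq$-maximal elements of $\Ap(H,h)$, so in each case it suffices to write $\Ap(H,h)$ down for a well-chosen $h$ and bound its maximal elements.

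For (i), write $H=\langle a,a+d,\dots,a+kd\rangle$ with this being a minimal generating set (so $\gcd(a,d)=1$, $a=\mult(H)$, and $\embdim(H)=k+1\le a$), and take $h=a$. Using at most $N$ of the generators one reaches precisely the integers $Na+wd$ with $0\le w\le Nk$; hence the smallest element of $H$ congruent to $jd$ modulo $a$ is $\beta_j=\lceil j/k\rceil\,a+jd$, and $\Ap(H,a)=\{\beta_0,\beta_1,\dots,\beta_{a-1}\}$. The key observation is that whenever $j+k\le a-1$ we have $\lceil(j+k)/k\rceil=\lceil j/k\rceil+1$, so $\beta_{j+k}-\beta_j=a+kd\in H\setminus\{0\}$; thus $\beta_j\prec\beta_{j+k}$ and $\beta_j$ is not maximal. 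Therefore every $\preceq$-maximal element has index in the window $\{a-k,a-k+1,\dots,a-1\}$, a set of $k$ integers (note $a-k\ge 1$), and so $\type(H)\le k=\embdim(H)-1$.

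For (ii) the plan is the same, but the Apéry set is much more intricate. Let $H$ be minimally generated by $n$ elements, $n-1$ of which form an arithmetic progression with common difference $d$; call the remaining generator $b$, let $H'$ be the numerical semigroup generated by the $n-1$ progression terms, and take for $h$ the first term $a_1$ of the progression. Every element of $H$ has a representation $cb+h'$ with $c\in\NN$ and $h'\in H'$; grouping the elements of $\Ap(H,a_1)$ by the least $c$ occurring in such a representation partitions $\Ap(H,a_1)$ into ``layers'', and within a single layer the elements form a truncation of the staircase $\beta_j$ of part (i), but with $k$ replaced by $n-2$. The argument of (i) then bounds the $\preceq$-maximal elements inside each layer by $n-2$, and an analysis of how $\preceq$ links consecutive layers shows that at most two layers can contain $\preceq$-maximal elements, giving $\type(H)\le 2(n-2)=2(\embdim(H)-2)$.

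I expect the main obstacle to be the combinatorial bookkeeping underlying (ii): because representations in $H$ are highly non-unique, one must carefully determine the layers, the exact range of the parameter $w$ within each layer, the order relations across layers, and several degenerate positions for $b$ (for instance $b$ small, or $b$ congruent modulo $a_1$ to a term of the progression). This is precisely the structural study of almost arithmetic semigroups developed by Patil--Singh and R\o dseth and sharpened in \cite{GRR}. Part (i), by contrast, is essentially immediate once the Apéry set is recorded, driven by the single translation by the largest jump $a+kd$.
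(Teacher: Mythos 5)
The paper states this Proposition without proof, merely citing \cite{Tripathi} and \cite{GRR}, so there is no in-paper argument to compare against; I am judging your proposal on its own terms. Your framework is sound: $\type(H)$ equals the number of $\preceq$-maximal elements of $\Ap(H,h)$, and your identification of the set $B$ from the proof of Theorem~\ref{thm:sametype} with those maximal elements is correct. Part (i) is then a complete and correct proof (for $\embdim(H)\geq 2$; the bound is vacuously false for $H=\NN$, which Tripathi excludes): the formula $\beta_j=\lceil j/k\rceil a+jd$ for $\Ap(H,a)$ is right, translation by the largest generator $a+kd$ destroys maximality of every $\beta_j$ with $j\leq a-1-k$, and the surviving window has exactly $k=\embdim(H)-1$ indices.

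Part (ii), however, is a plan rather than a proof, and its load-bearing steps are precisely the ones left unproved. You assert (a) that $\Ap(H,a_1)$ partitions into layers indexed by the least coefficient of the extra generator $b$, (b) that each layer is a truncated staircase in which the translation argument of (i) leaves at most $n-2$ maximal elements, and (c) that at most two layers can contain maximal elements. None of these is established. For (b) you would need the layers to be intervals in the staircase parameter and the index-shift by $n-2$ to stay inside the Ap\'ery set (truncation can break exactly this); for (c) you would have to produce, for every element of each ``middle'' layer, a strictly larger Ap\'ery element whose difference from it lies in $H$, and it is not clear which two layers survive or why cross-layer differences involving $b$ land in $H$. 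This is exactly the structural analysis of AA-semigroups carried out by Patil--Singh, R\o dseth and \cite{GRR}, as you yourself note; without it the bound $\type(H)\leq 2(\embdim(H)-2)$ has not been derived. Part (ii) should therefore be regarded as unproved in your write-up.
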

 
% That these are sharp it follows from the formulas \ref{eq:betti} for $H$ generated by an arithmetic sequence.
% According to \cite{GRR}, when $H=\langle 3k+2, 3k+3, \dots, 4k+2, 5k+3 \rangle$ we have $t(H)=2k$.
 
As noted before, symmetric semigroups have type $1$. It is a natural question if for {\em closely related} classes of semigroups the type is at least bounded by (a linear function in) the embedding dimension.

Almost symmetric semigroups have been introduced by Barucci and Fr\"oberg  \cite{BF} as a class of semigroups close to the symmetric ones. The semigroup $H$ is almost symmetric if for every $x \in \ZZ\setminus H$ such that $F(H)-x \notin H$ we have
$\{x, F(H)-x \} \subseteq PF(H)$; see also \cite{Nari} for equivalent characterizations.

Answering a question of Numata in \cite{Numata}, Moscariello \cite{Moscariello} proves the following result, see also \cite{HeWa}.

\begin{Theorem}(\cite[Theorem 1]{Moscariello}) 
If $H$ is almost symmetric and $\embdim(H) =4$ then $\type(H) \leq 3$, which is a sharp bound. 
\end{Theorem}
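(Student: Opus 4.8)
The plan is to reduce the statement to a bound on $|PF(H)|$ and then to force $|PF(H)|\le 3$ by a rigidity analysis of row-factorization matrices, following \cite{Moscariello}. By Theorem~\ref{thm:sametype} we have $\type(H)=|PF(H)|$, so it suffices to bound the latter. Write $H=\langle a_1,a_2,a_3,a_4\rangle$ with $\{a_1,\dots,a_4\}$ the minimal generating set, and set $F=F(H)$. The first observation is that almost symmetry pairs up the pseudo-Frobenius numbers: if $f\in PF(H)$ and $f\ne F$ then $F-f\notin H$ (otherwise $f+(F-f)=F\in H$, absurd), so the defining property of almost symmetric semigroups gives $\{f,F-f\}\subseteq PF(H)$; hence $PF(H)\setminus\{F\}$ is stable under $f\mapsto F-f$. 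Assuming for contradiction that $\type(H)\ge 4$, the set $PF(H)\setminus\{F\}$ has at least three elements, occurring in dual pairs $\{f,F-f\}$ together with at most one self-dual element $f=F/2$.

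Next I would attach to each $f\in PF(H)$ a row-factorization matrix $(m^f_{ij})_{1\le i,j\le 4}$: for each $i$, since $f\notin H$ but $f+a_i\in H$, one writes $f+a_i=\sum_j m^f_{ij}a_j$ with $m^f_{ij}\ge 0$; necessarily $m^f_{ii}=0$ (otherwise $f=(m^f_{ii}-1)a_i+\cdots\in H$), so after resetting $m^f_{ii}:=-1$ every row satisfies $\sum_j m^f_{ij}a_j=f$. The key algebraic leverage comes from a dual pair $\{f,g\}$ with $f+g=F$ (allowing $g=f$): adding the $i$-th rows of the matrices of $f$ and of $g$ and subtracting the $i$-th row of the matrix of $F$ yields the integer relation $a_i=\sum_{j\ne i}(m^f_{ij}+m^g_{ij}-m^F_{ij})a_j$, which by minimality of the generating set cannot have all its coefficients nonnegative. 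Thus in every row $i$ there is a column $j\ne i$ with $m^F_{ij}\ge 1$ and $m^f_{ij}\le m^F_{ij}-1$, and this must hold for every $f\in PF(H)\setminus\{F\}$ simultaneously.

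The heart of the proof is then a finite analysis of the $4\times 4$ matrices attached to $F$ and to the remaining pseudo-Frobenius numbers $f_1,f_2,\dots$. First I would pin down the possible shapes of the matrix of $F$: an almost symmetric $4$-generated semigroup is only a mild perturbation of a symmetric one, for which the structure of $I_H$ (due to Herzog and made precise by Bresinsky \cite{Bres}) forces the matrix of $F$, after relabeling the $a_i$, into an essentially cyclic support pattern, so that each of its rows carries only one or two positive entries. Combining this rigidity with the simultaneous row inequalities above, with the numerical identities $\sum_j m^f_{ij}a_j=f$, and with the ordering of the pseudo-Frobenius numbers, I would show that a third non-maximal pseudo-Frobenius number cannot be accommodated: either two of the matrices of the $f_k$ agree closely enough in their positive-entry patterns to force a dependence among $a_1,\dots,a_4$, or the relations forced by $f+(F-f)=F$ become incompatible with the inequalities. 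Since $\embdim(H)=4$ there are only finitely many admissible patterns, and exhausting them gives the contradiction, so $\type(H)=|PF(H)|\le 3$. For sharpness one checks directly that $H=\langle 4,5,6,7\rangle$ satisfies $\NN\setminus H=\{1,2,3\}$, hence is almost symmetric with $PF(H)=\{1,2,3\}$ and $\type(H)=3$.

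The main obstacle is the step just described: establishing the rigidity of the matrix attached to $F$ in the almost symmetric case, and then managing the case analysis that rules out the third pseudo-Frobenius number. Since row-factorization matrices are not unique, part of the difficulty is choosing them advantageously, and the rest is controlling the bookkeeping of the simultaneous row inequalities across all dual pairs; the remaining ingredients — the reduction to $|PF(H)|$ via Theorem~\ref{thm:sametype}, the pairing from almost symmetry, and the existence of row-factorization matrices — are routine.
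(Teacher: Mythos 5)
The survey itself gives no proof of this statement---it only cites Moscariello---so the comparison can only be against the strategy of the cited source, which is indeed the row-factorization-matrix method you describe. Your setup is correct as far as it goes: the reduction to $|PF(H)|$ via Theorem~\ref{thm:sametype}, the involution $f\mapsto F-f$ on $PF(H)\setminus\{F\}$ coming from almost symmetry, the normalization $m^f_{ii}=-1$, and the key observation that for a dual pair $f+g=F$ the combination of rows yields $a_i=\sum_{j\ne i}(m^f_{ij}+m^g_{ij}-m^F_{ij})a_j$, which by minimality of the generating set forces some $j\ne i$ with $m^f_{ij}+m^g_{ij}\le m^F_{ij}-1$, hence $m^F_{ij}\ge 1$. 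The sharpness example $\langle 4,5,6,7\rangle$ is also correct.

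However, there is a genuine gap: the entire content of the theorem lies in the ``finite analysis'' that you defer, and the one concrete idea you offer for it does not work as stated. You propose to pin down the support pattern of the matrix of $F$ by appealing to the structure of $I_H$ for \emph{symmetric} $4$-generated semigroups (Bresinsky), on the grounds that an almost symmetric semigroup is ``a mild perturbation'' of a symmetric one. But an almost symmetric semigroup of type $\ge 2$ is not symmetric, and there is no result letting you transfer Bresinsky's description of the defining ideal (or of factorizations of $F+a_i$) across such a ``perturbation''; in particular $F(H)$ itself changes, and the toric ideal of an almost symmetric $4$-generated semigroup has a different shape (this is exactly the content of Eto's later work cited in Section~\ref{sec:4continued}). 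Moscariello's actual argument does not pass through the symmetric case at all: it works directly with the RF-matrices of \emph{all} elements of $PF(H)$, exploits the freedom in choosing non-unique RF-matrices, and derives a contradiction from the simultaneous inequalities when four or more pseudo-Frobenius numbers are present. Since you explicitly leave both the rigidity claim and the exhaustion of cases as things you ``would show,'' the proposal is a correct opening but not a proof; the missing combinatorial core is precisely where the difficulty (and the answer to Numata's question) resides.
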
 

In embedding dimension larger than $4$ there are examples of almost symmetric semigroups with $\type(H) \geq \embdim(H)$.
Starting from such a semigroup,    Strazzanti in \cite[Remark 2.6.3, Example 2.6.4]{Strazzanti-thesis} constructs an  almost symmetric semigroup $L$  of  higher embedding dimension with $\type(L)- \embdim(L) > \type(H)-\embdim(H)$. This shows that there is no constant $c$ such that $\type(H) \leq \embdim(H)+c$ for every almost symmetric semigroup $H$.

 Extending  a question asked by Moscariello in \cite{Moscariello} we ask the following.
\begin{Question}
{\em
Is there any bound  for the type and the rest of the Betti numbers for $K[H]$ in terms of $\embdim(H)$ when $K[H]$  is almost/nearly Gorenstein?
}
\end{Question}

\begin{comment}
% this section is to be expanded at a later point in time
\section{Multiplicity}

Values/Bounds induced by the multiplicity

Extra structures: Gore/CI/almost Gore

MED, MEDSY papers by Rosales, irreducible semigps-Pacific Math J 2003
\end{comment}

\section{Shifted families of semigroups and  upper bounds for the number of  defining equations}
\label{sec:shifts}

A  recent idea used in the study of  Betti numbers of  semigroup rings was to examine their behaviour in families of semigroups. Firstly, for any sequence of nonnegative integers 
  $\ab: a_1 < \dots <  a_n$ 
it will be convenient to denote in this section by
 $I(\ab)$ the kernel of the $K$-algebra homomorphism 
$\varphi:K[x_1,\dots, x_n]\to K[t]$ letting $\varphi(x_i)=t^{a_i}$ for $i=1,\dots, n$.
For  any integer $k$ we set $\ab+k: a_1+k, \dots, a_n+k$ and we call $\{ \langle \ab+k \rangle \}_{k\geq 0}$ the
  shifted family of semigroups associated to the sequence $\ab$. 
Herzog and Srinivasan conjectured that the Betti numbers for the semigroup rings in this shifted family are eventually periodic in $k$, for $k \gg 0$. After partial results in \cite{GSS}, \cite{Marzullo} and \cite{JS}, that conjecture was proved by Vu \cite{Vu}  in the following generality.

\begin{Theorem}
\label{thm:vu} (Vu, \cite[Theorem 1.1]{Vu})
Let $\ab= a_1 <\dots <a_n$. There exists $k_V$ such that 
$$
\beta_i(I(\ab+k))= \beta_i(I(\ab+k+(a_n-a_1)))
$$ 
for all $i$ and all $k>k_V$.
\end{Theorem}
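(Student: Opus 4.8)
The plan is to prove the eventual periodicity of the Betti numbers $\beta_i(I(\ab+k))$ as $k$ grows, with period $a_n - a_1$, by analyzing the toric ideals $I(\ab+k)$ in a uniform way. The central idea is that as $k$ varies, the lattice $L_k = \{u \in \ZZ^n : \sum_i u_i(a_i+k) = 0\}$ changes in a controlled fashion: writing $u \in \ZZ^n$ with $\sum u_i = 0$ automatically lies in every $L_k$ when $\sum u_i a_i = 0$, but in general the condition $\sum u_i(a_i+k) = 0$ couples the ``$\ab$-degree'' and the ``length'' $\sum u_i$ of a relation. The first step would be to reduce from the toric ideal $I(\ab+k)$ to its lattice ideal $I_{L_k}$: since $I(\ab+k) = I_{L_k} : (x_1\cdots x_n)^\infty$, and for $k \gg 0$ the relevant generating binomials have bounded exponents (this is where Vu's structural input lives), one shows that taking the saturation does not disturb periodicity. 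The key technical tool here is a bound, uniform in $k$, on the degrees (in the $\ini$-variables) of a minimal generating set and, more importantly, on the entire minimal free resolution.

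Next I would invoke the strategy of comparing $I(\ab+k)$ with $I(\ab+k+(a_n-a_1))$ directly. The heuristic is that adding $a_n - a_1$ to every generator has the effect of ``shifting'' the combinatorics of the squarefree divisor complexes $\Delta_\lambda$ (Theorem \ref{thm: betti-simplicial}) in a periodic manner: there is an affine bijection between the homologically relevant degrees $\lambda$ for $\ab+k$ and those for $\ab+k+(a_n-a_1)$, once $k$ is large enough that boundary effects (small degrees, where the complex $\Delta_\lambda$ is ``incomplete'') have died out. Concretely, one would establish that for $k \gg 0$ there is a degree-preserving (up to the shift) isomorphism of the minimal free resolutions, for instance by exhibiting an explicit ``stretching'' map on the Koszul-type complexes or on the Taylor/lcm data attached to a generating set whose combinatorial type has stabilized. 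The eventual stabilization of the combinatorial type of a generating set for $I(\ab+k)$ — i.e., the supports and the pattern of the exponent vectors become independent of $k$ beyond a threshold — is the crux, and it follows from a finiteness argument: there are only finitely many ``combinatorial types'' of minimal generating sets that can occur, by the uniform degree bound.

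The main obstacle, and where I would expect to spend the most effort, is precisely that uniform bound on the complexity of the whole resolution, not just on $\beta_1 = \mu(I(\ab+k))$. Bounding $\mu(I(\ab+k))$ uniformly is already nontrivial (it is essentially the content of the earlier Vu/Herzog--Srinivasan width results), but for the full Betti sequence one needs control on the regularity or at least on the degrees appearing in every $F_i$, uniformly in $k \gg 0$. A clean way to get this would be: show $\reg$ of (a suitable homogenization or of $\gr_\mm$) is bounded independently of $k$, for example via a Gröbner-basis argument — homogenize with a new variable, bound the degrees of a Gröbner basis of the homogenized ideal uniformly in $k$, and deduce a uniform bound on the resolution of the initial ideal, hence on that of $I(\ab+k)$ itself by upper-semicontinuity. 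Once such a uniform bound $D$ is in hand, the finitely-many-types argument closes the proof: among all $k > k_V$, the resolution ``shape'' (ranks and twist-patterns of all $F_i$) takes finitely many values and must, by the affine shift symmetry sketched above, be periodic with period dividing $a_n - a_1$; a pigeonhole refinement then pins the period to exactly $a_n - a_1$ (or a divisor thereof, which is all the statement requires).

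A secondary subtlety worth isolating is the interaction between the two descriptions of $I(\ab+k)$ (elimination versus saturation of a lattice ideal): one must make sure that the ``stable combinatorial type'' is genuinely a type of a \emph{minimal} generating set of the toric ideal and survives the saturation step, rather than merely of the lattice ideal $I_{L_k}$. Handling this cleanly is likely to require the observation that for $k \gg 0$ no generator of $I_{L_k}$ is divisible by a variable (the lattice vectors have bounded positive and negative parts whose supports are, for large $k$, forced to be genuinely ``mixed''), so that $I_{L_k} = I(\ab+k)$ already, eliminating the saturation issue entirely in the stable range. I would do this reduction early so that the rest of the argument operates purely with lattice ideals of a periodically-shifting family of lattices.
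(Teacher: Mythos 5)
The survey does not actually prove this theorem: it cites Vu's paper and records only the key ingredient, namely Proposition \ref{prop:uv} (the special shape of the nonhomogeneous binomials in a minimal generating set of $I(\ab+k)$ for $k\gg 0$), together with the remark that the threshold $k_V$ is controlled by the Castelnuovo--Mumford regularity of the subideal $J(\ab)$ generated by the standard-graded homogeneous elements of $I(\ab+k)$ (which is independent of $k$). Measured against that, your sketch assembles the right raw materials --- a uniform regularity bound extracted from the homogeneous part, and the squarefree divisor complexes of Theorem \ref{thm: betti-simplicial} --- but as written it does not close.

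The genuine gap is the step that is supposed to produce periodicity. Finiteness of ``combinatorial types'' plus pigeonhole cannot do it: a sequence taking only finitely many values need not be eventually periodic, and pigeonhole shows at best that some value recurs, not that $\beta_i(I(\ab+k))=\beta_i(I(\ab+k+(a_n-a_1)))$ for \emph{every} large $k$. What actually carries the proof is an explicit correspondence, for $k\gg 0$, between the degrees $\lambda\in H_k$ with $\widetilde{H}_{i-1}(\Delta_\lambda;K)\neq 0$ and the corresponding degrees for $H_{k+(a_n-a_1)}$, together with isomorphisms of the associated complexes; in your write-up this is precisely the part you label a ``heuristic'' and never establish. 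Constructing it requires the structural input you skip: for $k\gg 0$ every $\lambda\in H_k$ has a well-defined order $m$ (the intervals $[m(a_1+k),m(a_n+k)]$ become disjoint), $I(\ab+k)$ splits as $J(\ab)$ plus nonhomogeneous binomials of the shape in Proposition \ref{prop:uv}, and the nontrivial reduced homology is confined to degrees of bounded order --- only then can one write down the degree shift and verify it preserves the complexes; once that is done the pigeonhole step is superfluous. A smaller confusion: the toric ideal \emph{is} the lattice ideal of the full kernel lattice $L_k$, so there is no saturation issue to dispose of (saturation only enters when one starts from the subideal generated by a lattice basis), and the ``reduction'' you propose to do early is a non-step rather than a lemma; the structural fact one really needs there is Proposition \ref{prop:uv}, which does not follow from the observation that generators are not divisible by a variable.
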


We fix $\ab$ and  denote $H_k=\langle \ab+k \rangle$.
A key ingredient in Vu's proof is the following fact.

\begin{Proposition}
\label{prop:uv}
(\cite[Corollary 3.7]{Vu}) For $k\gg 0$ the nonhomogeneous binomials in a minimal 
binomial system of generators  for $I(\ab+k)$ are of the form 
$x_1^{u_1} x_2^{u_2}\dots x_{n-1}^{u_{n-1}}- x_2^{v_2}\dots x_{n-1}^{v_{n-1}}x_n^{v_n}$  with $u_i v_i=0$ for $i=2,\dots, n-1$, $u_1, v_n >0$ and $\sum_{i=1}^{n-1} u_i > \sum_{i=2}^n v_i$.
\end{Proposition}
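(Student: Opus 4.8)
The plan is to analyze the shape of binomial generators of $I(\ab+k)$ for large $k$ by combining a degree/weight argument with a minimality argument. First I would recall that $I(\ab+k)$ is generated by binomials $x^u - x^v$ with $u,v\in\NN^n$ of disjoint support and $\langle \ab+k, u\rangle = \langle \ab+k, v\rangle$, where $\langle \ab+k, u\rangle := \sum_{i=1}^n (a_i+k)u_i$. Writing $|u| = \sum_i u_i$, the balancing condition reads $\langle \ab, u\rangle + k|u| = \langle \ab, v\rangle + k|v|$, so $k(|u|-|v|) = \langle \ab, v\rangle - \langle \ab, u\rangle$. The key observation is that for a \emph{minimal} generator the total degrees $|u|,|v|$ must stay bounded independently of $k$ (this is exactly where one invokes the results leading up to Vu's theorem, e.g. the eventual periodicity and the standing assumptions of \cite{Vu}; in the survey's logical order this Proposition is quoted as \cite[Corollary 3.7]{Vu} and its hypotheses are in force). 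Granted that bound, the displayed identity $k(|u|-|v|) = \langle \ab, v\rangle - \langle \ab, u\rangle$ forces, for $k$ larger than the (bounded) right-hand side, that $|u| = |v|$ would make the binomial homogeneous; hence for the \emph{nonhomogeneous} generators we must have $|u| \neq |v|$, and WLOG $|u| > |v|$, which gives the inequality $\sum_{i=1}^{n-1} u_i \ge |u| > |v| \ge \sum_{i=2}^n v_i$ once we know $u_n = v_1 = 0$.

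Next I would pin down the support restrictions $u_n = 0$, $v_1 = 0$, $u_1 > 0$, $v_n > 0$, and $u_i v_i = 0$ (the last is automatic from disjoint support). The vanishing $u_i v_i = 0$ is free. For $u_n = 0$: if $u_n > 0$, then since $a_n + k$ is the largest generator and $|u| > |v|$, one checks that $x^u - x^v$ is not needed in a minimal generating set — more precisely, one can reduce modulo a homogeneous generator or modulo another binomial of smaller degree to strip off the $x_n$ factor, using that for $k\gg 0$ the ``large'' generator $a_n+k$ cannot help balance a relation where the high-degree side already outweighs the other. Symmetrically, $v_1 = 0$ because $a_1 + k$ is the smallest generator and appears on the side of \emph{smaller} total degree; keeping an $x_1$ on the low side can be exchanged for $x_1$'s on the high side. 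Finally $u_1 > 0$ and $v_n > 0$: if $u_1 = 0$ then both sides avoid $x_1$, and subtracting/adding $k$ to each generator shifts the relation in a way that contradicts either minimality or nonhomogeneity (essentially, such a binomial would already be a relation ``stable under shift,'' hence homogeneous in the relevant sense); similarly for $v_n$. Throughout, the arithmetic is the bookkeeping $k(|u|-|v|) = \langle \ab,v\rangle - \langle\ab,u\rangle$ together with the a priori degree bound.

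The main obstacle I expect is establishing the a priori bound on $|u|$ and $|v|$ for minimal generators as $k\to\infty$, and then converting ``not minimal'' into the concrete support statements $u_n=0$, $v_1=0$: this requires showing that any binomial violating these can be rewritten as an $S$-combination of strictly lower-degree binomial generators for $k$ large, which is precisely the content that makes \cite[Corollary 3.7]{Vu} nontrivial. In a survey one is entitled to quote this; a self-contained argument would go through a careful analysis of the Apéry set of $H_k$ with respect to $a_1+k$ (or $a_n+k$) and its stabilization pattern in $k$, of the kind recorded in Section \ref{sec:shifts}. Everything after that bound — the identity $k(|u|-|v|)=\langle\ab,v\rangle-\langle\ab,u\rangle$, the dichotomy homogeneous vs.\ $|u|\neq|v|$, and the final inequality $\sum_{i=1}^{n-1}u_i > \sum_{i=2}^n v_i$ — is then the short, routine part.
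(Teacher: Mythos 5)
The survey gives no proof of this Proposition: it is quoted verbatim from \cite[Corollary 3.7]{Vu}, so there is no in-paper argument to compare against, and your sketch has to stand on its own. It does not, because its central premise is false. You assert as the ``key observation'' that the total degrees $|u|,|v|$ of a minimal binomial generator stay bounded independently of $k$. In fact the identity you yourself write down rules this out: if $|u|>|v|$ then $k(|u|-|v|)=\langle\ab,v\rangle-\langle\ab,u\rangle\le a_n|v|$, so $|v|\ge k/a_n$ is forced to grow without bound. Worse, under your boundedness hypothesis that same identity would force \emph{every} minimal generator to be homogeneous for $k\gg 0$, contradicting already the case $n=2$, where $I(\ab+k)$ is principal, generated by the nonhomogeneous binomial $x_1^{(a_2+k)/d}-x_2^{(a_1+k)/d}$ whose degree grows linearly in $k$. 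The true phenomenon --- and the actual content of Vu's Corollary 3.7 --- is that only the middle exponents $u_2,\dots,u_{n-1},v_2,\dots,v_{n-1}$ stabilize, while $u_1$ and $v_n$ grow with $k$; this is exactly what the survey records immediately after the Proposition, where it notes that under the shift $k\mapsto k+(a_n-a_1)$ what changes in these binomials are the exponents of $x_1$ and $x_n$.

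Once one \emph{knows} that the middle exponents are bounded, your bookkeeping does deliver the support conditions: if $u_1=0$ then $|u|$ is bounded, so $k(|u|-|v|)\le\langle\ab,v\rangle-\langle\ab,u\rangle$ is bounded while the left side is at least $k$, a contradiction; symmetrically for $v_n=0$; and $u_iv_i=0$ is indeed free from primeness of the toric ideal, while the final inequality is just the normalization $|u|>|v|$. But the boundedness of the middle part is precisely the nontrivial input, and your sketch replaces it by an appeal to ``the results leading up to Vu's theorem, e.g.\ the eventual periodicity'' --- which is circular, since the survey presents Proposition \ref{prop:uv} as a key ingredient \emph{in} the proof of Theorem \ref{thm:vu}, not a consequence of it. Vu's actual argument splits $I(\ab+k)$ into its homogeneous subideal $J(\ab+k)$ plus a nonhomogeneous part and controls the latter via a Castelnuovo--Mumford regularity bound on $J(\ab)$ together with a squarefree-divisor-complex analysis of the relevant multidegrees; none of that is recoverable from the degree identity alone, and your reductions (``strip off the $x_n$ factor,'' ``exchange $x_1$'s between the two sides'') are not justified and, as stated, do not preserve membership in the ideal degree by degree.
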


 With the translation $k\mapsto k+ (a_n-a_1)$, what changes in these binomials are the exponents of $x_1$ and $x_n$ which increase by a quantity that  is also periodic with period $a_n-a_1$.
 
Proposition \ref{prop:uv} was used by Herzog and Stamate (\cite{HS}) to derive that:

\begin{Theorem} 
\label{thm:hs}
(\cite[Theorem 1.4]{HS})
For $k > k_V$ the ring $\gr_\mm K[H_k]$ is Cohen-Macaulay and  it has the same Betti sequence as $K[H_k]$.   
\end{Theorem}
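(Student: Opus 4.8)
The plan is to exploit the explicit shape of the generators of $I(\ab+k)$ given by Proposition \ref{prop:uv} in order to identify a regular element of positive degree in $\gr_\mm K[H_k]$, which by the discussion preceding Theorem \ref{thm:hs} is equivalent to $\gr_\mm K[H_k]$ being Cohen--Macaulay. The natural candidate is $\widehat{t^{\mult(H_k)}}$, i.e.\ the image of $x_1$ in $\gr_\mm K[H_k] \cong S/I(\ab+k)^*$; note that for $k$ large, $\mult(H_k)=a_1+k=\min(\ab+k)$. So the first step is to pass to a standard basis of $I(\ab+k)$ and compute $I(\ab+k)^*$. Here one uses Proposition \ref{prop:uv}: for $k\gg 0$ the nonhomogeneous binomials have the form $x_1^{u_1}\cdots x_{n-1}^{u_{n-1}} - x_2^{v_2}\cdots x_n^{v_n}$ with total degree of the first monomial strictly larger; since these are the minimal generators with $x_1$ on one side and $x_n$ on the other, their initial forms are the pure-power-dominated monomials $x_2^{v_2}\cdots x_n^{v_n}$ (the side of smaller total degree), which do not involve $x_1$. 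The homogeneous binomials in $I(\ab+k)$, meanwhile, live in the subring $K[x_2,\dots,x_{n-1}]$-type relations or are themselves their own initial forms. One must verify (using the periodicity/stability arguments underlying Vu's theorem, or a direct Gröbner-basis argument on the homogenization) that this set of binomials, together with the homogeneous ones, is in fact a standard basis for $k>k_V$, so that $I(\ab+k)^*$ is generated by elements none of which is divisible by $x_1$ and such that $x_1$ is a nonzerodivisor on $S/I(\ab+k)^*$.

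Once $x_1$ is shown to be regular on $\gr_\mm K[H_k]$, the Auslander--Buchsbaum bound $\embdim(H_k)-1 \le \projdim \gr_\mm K[H_k] \le \embdim(H_k)$ forces $\depth \gr_\mm K[H_k] = 1$ and hence $\projdim_S \gr_\mm K[H_k] = n-1 = \projdim_S K[H_k]$, so $\gr_\mm K[H_k]$ is Cohen--Macaulay. For the equality of Betti sequences, the idea is that modding out by the regular element $x_1$ reduces both $K[H_k]$ and $\gr_\mm K[H_k]$ to Artinian quotients, and the standard-basis description shows that $S/(I(\ab+k)+(x_1))$ and $S/(I(\ab+k)^*+(x_1))$ have the \emph{same} defining ideal in $K[x_2,\dots,x_n]$: setting $x_1=0$ kills exactly the larger monomial in each nonhomogeneous binomial, turning $x_1^{u_1}\cdots x_{n-1}^{u_{n-1}} - x_2^{v_2}\cdots x_n^{v_n}$ into $-x_2^{v_2}\cdots x_n^{v_n}$, which is precisely the initial form, while the homogeneous binomials are unaffected. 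Since Betti numbers are preserved under dividing by a regular element (both modules being Cohen--Macaulay of depth $1$), $\beta_i(K[H_k]) = \beta_i(S/(I(\ab+k)+(x_1))) = \beta_i(S/(I(\ab+k)^*+(x_1))) = \beta_i(\gr_\mm K[H_k])$ for all $i$.

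The main obstacle I anticipate is the first step: proving that the binomials furnished by Proposition \ref{prop:uv} actually form a standard basis of $I(\ab+k)$ for $k>k_V$ (equivalently, controlling all of $I(\ab+k)^*$, not just the initial forms of a generating set), and in particular ruling out that new initial forms divisible by $x_1$ appear from $S$-polynomial computations. This requires combining the structural information on $I(\ab+k)$ used in Vu's proof with a careful analysis of which initial forms can arise; the degree estimate $\sum_{i=1}^{n-1}u_i > \sum_{i=2}^{n}v_i$ in Proposition \ref{prop:uv} is exactly what guarantees the $x_1$-free monomial is the initial one, but one still needs that no cancellation among generators produces a lower-degree form involving $x_1$. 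A secondary technical point is making the reduction "mod a regular element preserves Betti numbers" precise in the nonstandard-graded setting for $K[H_k]$ versus the standard-graded setting for $\gr_\mm K[H_k]$; this is routine but should be stated carefully, using that $\Tor^S_i(-,K)$ is unchanged when one passes to the quotient by a nonzerodivisor that is part of a minimal generating set of the relevant maximal ideal.
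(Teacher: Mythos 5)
Your overall architecture coincides with the one the paper (following \cite{HS}) uses: reduce modulo $x_1$, the generator of smallest degree, observe that $I(\ab+k)+(x_1)$ and $I(\ab+k)^*+(x_1)$ cut out the same ideal of $K[x_2,\dots,x_n]$ because setting $x_1=0$ in each nonhomogeneous generator from Proposition \ref{prop:uv} leaves exactly its initial form, and then transfer Betti numbers across the regular element. The genuine gap is the one you flag yourself: you never establish that the generators furnished by Proposition \ref{prop:uv} form a standard basis of $I(\ab+k)$ and that $x_1$ is a nonzerodivisor on $S/I(\ab+k)^*$, and the route you sketch (controlling all $S$-polynomials, or a Gr\"obner basis of the homogenization) is exactly the computation this argument is designed to avoid. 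Without that step you do not actually know $I(\ab+k)^*$, so the claimed equality of the two Artinian reductions, and hence the whole Betti number comparison, is unsupported.

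The missing ingredient is Herzog's super-regularity criterion \cite[Theorem 1]{He-reg}, in the form of \cite[Lemma 1.2]{HS}: if $f_1,\dots,f_m$ generate $I$, if the orders of the reductions $\bar{f}_i\in\bar{S}=K[x_2,\dots,x_n]$ equal the orders of the $f_i$, and if $\bar{f}_1^*,\dots,\bar{f}_m^*$ generate $(\bar{I})^*$, then the $f_i$ are a standard basis of $I$ and $x_1$ is regular on $S/I^*$. Here these hypotheses are immediate from Proposition \ref{prop:uv}: a minimal binomial generator of the prime ideal $I(\ab+k)$ has at most one monomial divisible by $x_1$, so each $\bar{f}_i$ is either a homogeneous binomial or the monomial $x_2^{v_2}\cdots x_n^{v_n}$ of degree $\sum v_i=\deg f_i^*$; hence $\bar{I}$ is generated by standard-homogeneous elements, $\bar{I}=(\bar{I})^*$, and no $S$-polynomial analysis is needed. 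With that lemma in place the remainder of your argument (Cohen--Macaulayness via Auslander--Buchsbaum, and $\beta_i(S/I)=\beta_i(\bar{S}/\bar{I})=\beta_i(S/I^*)$ by factoring out the regular element $x_1$ from each) goes through as you describe; this is precisely the proof of \cite[Theorem 1.4]{HS}, and the survey rehearses the same mechanism explicitly in its treatment of the Bresinsky and Arslan families.
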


In \cite[Corollary 6.5]{JZ} Jafari and Zarzuela reprove this fact noticing that for $k\gg 0$ the semigroup $H_k$ is
 homogeneous,  a concept they introduce. 

As a corollary of these periodicity results, we see that in any of the families
$\{K[H_k]\}_{k\geq 0}$ or $\{\gr_\mm K[H_k]\}_{k\geq 0}$ properties like Gorenstein or complete intersection 
occur either eventually with period $a_n-a_1$, or only for a finite set of shifts $k$.

The bound $k_V$ given in \cite{Vu} is usually not  optimal, and it involves the Castelnuovo-Mumford regularity of the ideal $J(\ab)$ of homogeneous polynomials in $I(\ab+k)$ for some (and hence for all) $k$.

In case $n=3$, these statements may be sharpened.  Note that by the results in Section \ref{sec:edim3}, the  periodicity for the Betti sequence of $K[H_k]$  now  means that for $k\gg 0$ the semigroup $H_k$ is symmetric (actually CI) with period $a_3-a_1$. The principal (i.e. the smallest)  period might be smaller, and it is determined in  \cite[Theorem 3.1]{S-3semi} in terms of the sequence $\ab$.
A smaller value than $k_V$ for the shift $k$ where periodicity occurs  is provided in \cite{S-3semi}. 
Also, in that paper  exact formulas are conjectured  for the thresholds from where the Betti numbers of $K[H_k]$, respectively of $\gr_\mm K[H_k]$ start changing periodically.

\medskip
Another corollary of Theorems \ref{thm:vu} and \ref{thm:hs} is that if we bound the width of the semigroup $H$, the Betti numbers of $K[H]$ and $\gr_\mm K[H]$ are bounded, as well.  
We recall that the width of the  numerical semigroup $H$ is the difference between the largest and the smallest minimal generator of $H$.

Two statements have been formulated  in \cite{HS} regarding the number of generators for $I_H^*$ (hence also for $I_H$). 
Firstly, for a numerical semigroup $H$ minimally generated by $a_1<\dots <a_n$ we define 
its   interval completion  to be the semigroup $\widetilde{H}$ generated by all the integers in the interval $[a_1, a_n]$.

\begin{Conjecture} (\cite[Conjectures 2.1, 2.4]{HS})
\label{conj:one} For any numerical semigroup $H$ one has
\begin{enumerate}
\item [(i)] $\mu(I_H^*) \leq {{\width(H) +1 }\choose 2 }$,
\item [(ii)] $\mu(I^*_H) \leq \mu(I^*_{\widetilde{H}})$. 
\end{enumerate}
\end{Conjecture}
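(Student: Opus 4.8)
The plan is to proceed in two stages, proving (i) first and then deducing (ii) — or more honestly, attacking them in parallel since the interval completion $\widetilde{H}$ is precisely the extreme case of (i). First I would fix a minimal generating set $a_1 < \dots < a_n$ with $\width(H) = a_n - a_1 = w$, and recall from Theorem \ref{thm: betti-simplicial} that $\mu(I_H^*) = \beta_1(\gr_\mm K[H])$ is governed by the reduced $0$-th homology (number of connected components minus one) of the analogue of the squarefree divisor complexes for the standard-graded ideal $I_H^*$. The key structural observation is that $\gr_\mm K[H] \cong S/I_H^*$ and the minimal generators of $I_H^*$ are homogeneous of bounded degree: a degree-$d$ generator corresponds to a relation $\sum u_i a_i = \sum v_i a_i$ with $\sum u_i = \sum v_i = d$ (after possibly adding a monomial of the smaller degree). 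Because $a_1 \le a_i \le a_n$ for all $i$, such a relation forces $d a_1 \le d a_1 + (\text{something} \le d w)$, and one shows the relevant degrees $d$ are confined to an interval of length roughly $w$; this is where the quadratic bound $\binom{w+1}{2}$ should come from, by counting, in each admissible degree $d$, how many new independent syzygies can appear.

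Concretely, I would stratify by the degree $d$ of a minimal generator of $I_H^*$ and bound the contribution $\mu_d(I_H^*)$ in each degree. Writing $\gr_\mm K[H]$ in terms of its Apéry-type / numerical data with respect to $t^{a_1}$ (the multiplicity), the Hilbert function of $\gr_\mm K[H]$ in degree $d$ is at most $w+1$ once $d$ is large enough — there are at most $w+1$ "slots" because consecutive generators differ by at most $w$. A minimal generator in degree $d$ is a linear dependence among the degree-$d$ monomials modulo those coming from lower degrees, so $\mu_d(I_H^*) \le$ (number of degree-$d$ monomials in $S$ that map to the same $\gr_\mm K[H]$-element) is controlled, and summing over the $O(w)$ relevant degrees a count of $O(w)$ per degree yields $O(w^2)$; the precise bookkeeping should give exactly $\binom{w+1}{2}$. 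For part (ii), the interval completion $\widetilde{H} = \langle a_1, a_1+1, \dots, a_n\rangle$ realizes the maximal possible Hilbert function in every degree among all semigroups of width $w$ with smallest generator $a_1$, so a comparison-of-complexes argument — exhibiting an injection of the relevant cycle spaces, or a flat-degeneration / specialization argument sending the generators of $H$ to a subset of those of $\widetilde H$ — should show $\mu(I_H^*) \le \mu(I_{\widetilde H}^*)$, and then (ii) together with the width bound for $\widetilde H$ re-proves (i).

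The hard part will be the last step, the monotonicity $\mu(I_H^*) \le \mu(I_{\widetilde H}^*)$: there is no general principle that adding generators to a numerical semigroup increases the number of defining equations of the tangent cone, since the tangent cone is a subtle non-flat invariant and removing a generator can both kill old syzygies and create new ones. One route is to realize $K[H]$ as a quotient/section of $K[\widetilde H]$ and track what happens to the initial ideals under the induced map, controlling the change via a spectral sequence or a mapping-cone comparison of the two standard bases; another is to argue combinatorially with the divisor complexes, showing that each connected component of $\Delta_\lambda$ for $\widetilde H$ "refines" or dominates the corresponding picture for $H$ in a way that is monotone on $\widetilde H_0$. I would expect to need the extra hypothesis (implicit in Conjecture \ref{conj:one}) that $\widetilde H$ has the \emph{same} smallest generator, and possibly to restrict first to the range $k \gg 0$ where Theorem \ref{thm:hs} guarantees $\gr_\mm K[H_k]$ is Cohen-Macaulay of homogeneous type, so that the comparison can be done for $K[H_k]$ (a domain) rather than for the non-reduced tangent cone — then transfer back by periodicity. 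Establishing the monotonicity unconditionally, without such a Cohen-Macaulayness crutch, is where I anticipate the genuine difficulty, and it may well be why the statement is phrased as a conjecture rather than a theorem.
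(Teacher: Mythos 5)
This statement is presented in the paper as an open \emph{conjecture} (Conjecture~\ref{conj:one}, quoted from Herzog--Stamate), and the paper contains no proof of it; the only argument offered there is the easy reduction that (ii) implies (i), because $\widetilde H$ is generated by an arithmetic sequence and so $\mu(I^*_{\widetilde H})$ is given explicitly by Eq.~\eqref{eq:betti}, where the maximum over the residue $b$ is exactly $\binom{\width(H)+1}{2}$. Your closing observation reproduces that reduction correctly, but everything else in your proposal is a research plan rather than a proof, and the decisive steps are all missing. Concretely: (a) you never establish that the degrees of minimal generators of $I_H^*$ are confined to an interval of length $O(\width(H))$, and your per-degree bound rests on the claim that the number of minimal generators in degree $d$ is controlled by the Hilbert function of $\gr_\mm K[H]$ in degree $d$ --- but $\mu_d(I_H^*)=\dim_K\bigl((I_H^*)_d/S_1(I_H^*)_{d-1}\bigr)$ is not bounded by the number of ``slots'' in the quotient, so the counting heuristic does not go through as stated, and the assertion that the bookkeeping ``should give exactly $\binom{w+1}{2}$'' is not backed by any computation; (b) the monotonicity $\mu(I_H^*)\le\mu(I^*_{\widetilde H})$ is precisely the content of part (ii), and the mechanisms you name for it (mapping cone, spectral sequence, divisor-complex comparison, flat degeneration) are listed as options rather than carried out --- as you yourself concede.

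Two smaller points. The ``extra hypothesis'' you anticipate needing is vacuous: by definition $\widetilde H$ is generated by all integers in $[a_1,a_n]$, so it automatically shares the smallest generator with $H$. And retreating to the range $k\gg 0$ of a shifted family via Theorem~\ref{thm:hs} cannot rescue the argument, since the conjecture is asserted for \emph{every} numerical semigroup, not just for large shifts; periodicity gives you no way to ``transfer back'' to small $k$. In short, you have correctly diagnosed why the statement is open, but what you have produced is an outline of where a proof might live, not a proof.
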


 If correct, the second point of the conjecture above  would imply the first one. 
Also,  since $\widetilde{H}$ is generated by an arithmetic sequence, we may use Eqs.~\eqref{eq:betti} and   obtain effective bounds for $\mu(I_H^*)$.

Betti numbers for intersections of toric ideals  (and of their ideals of initial forms) of numerical semigroups in the same shifted family are considered in \cite{CS}.  Given the sequence $\ab$ as above,  let 
$$
\mathcal{I}_\mathcal{A}(\ab)=\bigcap_{k\in \mathcal{A}} I(\ab+k) \text{ and } \mathcal{J}_\mathcal{A}(\ab)=\bigcap_{k\in \mathcal{A}} I(\ab+k)^*.
$$ 
It is conjectured in \cite{CS} that  their Betti numbers are preserved under shifting $\mathcal{A} \mapsto \mathcal{A}+(a_n-a_1)$,  if $\min \mathcal{A} \gg 0$; see \cite[Proposition 2.3]{CS}  for some proved cases.

Conaway et al.  in \cite{ConawayAll}, and O'Neill and Pelayo in \cite{ONeill-Pelayo-Apery} study factorization invariants, respectively the Ap\'ery sets  in shifted semigroups $H_k$ for $k\gg 0$.
As a byproduct they obtain the following.

\begin{Proposition} 
\label{prop:better}
(\cite[Theorems 3.4, 4.9]{ConawayAll}, \cite[Theorem 4.9]{ONeill-Pelayo-Apery}) For  $k > (a_n-a_1)^2-a_1$
\begin{enumerate}
\item[(i)] the number of minimal generators for $I(\ab+k)$ and the type of $  H_k $ are periodic in $k$ with period $a_n-a_1$;
\item[(ii)] the conclusion of Proposition \ref{prop:uv} holds. 
 \end{enumerate}
\end{Proposition}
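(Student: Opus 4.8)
The plan is to prove Proposition \ref{prop:better} by tracking the explicit combinatorial machinery developed in \cite{ConawayAll} and \cite{ONeill-Pelayo-Apery} for the shifted family $H_k = \langle \ab+k\rangle$, and verifying that all the relevant invariants stabilize into periodic behaviour once the shift $k$ exceeds the quadratic threshold $(a_n-a_1)^2-a_1$. The key object is the structure of the factorization posets (equivalently, the Ap\'ery sets $\Ap(H_k, a_1+k)$) as $k$ grows: each element of $\Ap(H_k,a_1+k)$ can be written, for $k\gg 0$, using only the ``outer'' generators $a_1+k$ and $a_n+k$ together with bounded multiples of the middle generators, and this shape is exactly what makes the data periodic in $k$ with period $a_n-a_1$. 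So the first step is to recall (or reconstruct) the description of $\Ap(H_k,a_1+k)$ from \cite[Theorem 4.9]{ONeill-Pelayo-Apery}, identify the precise point at which the ``eventual'' description becomes valid, and check that this point is at most $(a_n-a_1)^2-a_1$.

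For part (i), I would argue as follows. The type of $H_k$ equals $|PF(H_k)|$, and $PF(H_k)$ is read off from the maximal elements of $\Ap(H_k,a_1+k)$ under the natural partial order $x\preceq y \iff y-x\in H_k$ (shifted by $-(a_1+k)$), as in the proof of Theorem \ref{thm:sametype}. Since the Ap\'ery set has a $k$-periodic description once $k>(a_n-a_1)^2-a_1$, the poset of its elements — and hence its set of maximal elements — is the ``same'' for $k$ and $k+(a_n-a_1)$ up to the predictable shift of exponents on $x_1$ and $x_n$ described after Proposition \ref{prop:uv}; this yields periodicity of $\type(H_k)$. For the number of minimal generators $\mu(I(\ab+k))=\beta_1$, I would invoke Theorem \ref{thm: betti-simplicial}: $\beta_1$ counts (one less than) the number of connected components of the squarefree divisor complexes $\Delta_\lambda$, and the set of $\lambda\in H_k$ for which $\Delta_\lambda$ is disconnected, together with the component structure, is again governed by the stabilized factorization data, hence periodic in $k$. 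Alternatively, and perhaps more cleanly, part (ii) — which asserts exactly that the normal form of Proposition \ref{prop:uv} holds for $k$ past the quadratic bound — directly gives a concrete $k$-periodic description of a minimal binomial generating set, from which periodicity of $\mu$ is immediate by counting binomials and matching them under the shift.

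For part (ii), the task is to show that for $k>(a_n-a_1)^2-a_1$ every inhomogeneous binomial in some minimal generating set of $I(\ab+k)$ has the trailing-term/leading-term shape $x_1^{u_1}\cdots x_{n-1}^{u_{n-1}} - x_2^{v_2}\cdots x_n^{v_n}$ with $u_iv_i=0$ for $2\le i\le n-1$, $u_1,v_n>0$, and total degree strictly dropping. This is essentially a quantitative refinement of Proposition \ref{prop:uv}: one shows that any relation among the $a_i+k$ that uses a middle generator ``too many times'' can be rewritten, using a relation of smaller degree, so that the middle exponents are bounded; the point is that the obstructions to such reductions all live below the stated threshold, which one extracts from a careful bookkeeping of how the $a_i$ (not $a_i+k$) can combine — the size of the gaps $a_i - a_1$ and $a_n-a_1$ controls everything, and the worst case is bounded by $(a_n-a_1)^2$. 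I expect \textbf{this quantitative bound} to be the main obstacle: proving \emph{some} $k_0$ works (Proposition \ref{prop:uv}) is softer than pinning $k_0 \le (a_n-a_1)^2-a_1$, and the latter requires the explicit Ap\'ery-set combinatorics of \cite{ONeill-Pelayo-Apery} rather than the asymptotic Gröbner-theoretic argument of \cite{Vu}. I would therefore organize the proof around first establishing the sharp Ap\'ery-set stabilization bound, and then deducing both (i) and (ii) as corollaries of that single combinatorial fact.
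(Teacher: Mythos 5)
This proposition is stated in the survey purely as a citation of \cite[Theorems 3.4, 4.9]{ConawayAll} and \cite[Theorem 4.9]{ONeill-Pelayo-Apery}; the paper supplies no proof of its own, so there is nothing internal to match your argument against. Judged on its own terms, your proposal correctly identifies the right machinery (the Ap\'ery sets $\Ap(H_k,a_1+k)$ and the factorization data of the shifted monoids, with $PF(H_k)$ read off from the maximal elements of the Ap\'ery poset, and $\mu(I(\ab+k))$ controlled either by the squarefree divisor complexes or by the explicit periodic generating set from part (ii)). The reductions of (i) and (ii) to a single stabilization statement are sound in outline.

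The genuine gap is that the one statement everything hinges on is never proved. You write that you would ``first establish the sharp Ap\'ery-set stabilization bound'' and you correctly flag that pinning the threshold at $(a_n-a_1)^2-a_1$ is the hard, quantitative part (as opposed to the soft ``some $k_0$ works'' of Proposition \ref{prop:uv}); but the proposal then stops exactly there. No argument is given for why every factorization of an element of $\Ap(H_k,a_1+k)$ can be rewritten with bounded middle exponents once $k$ exceeds the quadratic threshold, nor why the rewriting obstructions all occur below it --- the phrase ``careful bookkeeping of how the $a_i$ can combine'' is a placeholder for the entire content of the cited theorems. A secondary, smaller gap: in part (i) you assert that the set of Betti elements $\lambda$ with $\Delta_\lambda$ disconnected, together with their component structure, ``is again governed by the stabilized factorization data, hence periodic''; this implication is plausible but also requires an argument (one must match trades/connected components across the shift $k\mapsto k+(a_n-a_1)$, not just the Ap\'ery sets as sets). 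As written, the proposal is a correct roadmap to the literature rather than a proof.
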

 
With the same argument as in \cite{HS},   Proposition \ref{prop:better}(ii) implies that $\gr_\mm K[H_k]$ is Cohen-Macaulay and  it has the same Betti sequence as $K[H_k]$ already for $k > (a_n-a_1)^2-a_1$, improving Theorem \ref{thm:hs}.

\section{Betti numbers for simple gluings}
\label{sec:betti-gluings}

Having its roots in  describing the structure of CI semigroups (\cite{Delorme}, \cite{Watanabe}, \cite{He-semi}),
the technique of gluing   has been used  to create  interesting  examples. We refer to Delorme's \cite{Delorme},  Rosales' \cite{Rosales} 
and the monograph \cite{NS-book} for a more general definition.

For our purposes, we consider a special case. Let $L$ be a numerical semigroup minimally generated by $a_1<\dots <a_n$, and $c>1$ and $d$ 
coprime integers such that $d\in L\setminus \{a_1, \dots, a_n\}$.
Then 
$$
H=\langle c L, d\rangle= \langle c a_1, \dots, c a_n, d\rangle
$$
 is said (in \cite{HS-koszulcone}) to be obtained from $L$ by a simple gluing.
%\footnote{[MOVE]In the terminology of \cite[Definition 3.1]{AM}, such an $H$ is called an extension of $L$.}

The order of $d$ in $L$ is defined as $\ord_L(d)=\max \{\sum_{i=1}^{n} \lambda_i: d=\sum_{i=1}^n \lambda_i a_i, \lambda_i  \in \NN \}$. 

If we  write $d=\lambda_1 a_1+\dots +\lambda_n a_n$ with $\lambda_i$ nonnegative integers and $\sum_{i=1}^n \lambda_i$ maximal,  and we consider the gluing relation
$$
f= x_{n+1}^c- \prod_{i=1}^n x_i^{\lambda_i},
$$ 
then  it is known (e.g. from the proof of Lemma 1 in \cite{Watanabe}, or \cite[Theorem 1.4]{Rosales})  
that 
$$
I_H= (I_L, f).
$$

The following lemma is the key step in describing the Betti numbers of $K[H]$ and $\gr_\mm K[H]$ in terms of the Betti numbers of $K[L]$ and $\gr_\mm K[L]$, respectively.

\begin{Lemma} 
\label{lemma:regular}
(\cite[Theorem 2.3, Lemma 2.6]{HS-koszulcone})
With notation as above, the following hold:
\begin{enumerate}
\item[(i)] $f$ is regular on $K[x_1,\dots, x_{n+1}]/I_L$,
\item[(ii)] if $c\leq \ord_L d$, then $f^*$ is regular on $K[x_1,\dots, x_{n+1}]/I_L^*$ and $I_H^*=(I_L^*, f^*)$.
\end{enumerate}
\end{Lemma}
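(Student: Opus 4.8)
The plan is to establish the two parts separately, with part (i) essentially a warm-up for the slightly more delicate part (ii).

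For part (i), I would work in the ring $A=K[x_1,\dots,x_{n+1}]/I_L$. Note that $I_L$ (viewed inside the larger polynomial ring) does not involve $x_{n+1}$, so $A\cong (K[x_1,\dots,x_n]/I_L)[x_{n+1}]=K[L][x_{n+1}]$, a polynomial ring in $x_{n+1}$ over the domain $K[L]$. The element $f=x_{n+1}^c-\prod_{i=1}^n x_i^{\lambda_i}$ is a monic polynomial of degree $c$ in $x_{n+1}$ over $K[L]$, hence it is a nonzerodivisor on $A$: if $g\cdot f=0$ in $A$, comparing leading coefficients in $x_{n+1}$ forces $g=0$ since $K[L]$ is a domain. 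This is the quick argument; one can alternatively invoke that $A$ is a domain of dimension $2$ and $f$ is not a unit and not zero, so $A/(f)$ has dimension $1$, but the monic-polynomial argument is cleanest and also sets up part (ii).

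For part (ii), assume $c\le\ord_L d$. First I would identify $f^*$, the initial form of $f$ with respect to the standard grading on $K[x_1,\dots,x_{n+1}]$ (all variables in degree $1$): the monomial $x_{n+1}^c$ has degree $c$ while $\prod x_i^{\lambda_i}$ has degree $\sum\lambda_i=\ord_L d$, so under the hypothesis $c\le\ord_L d$ we get $f^*=x_{n+1}^c$ if $c<\ord_L d$, and $f^*=x_{n+1}^c-\prod x_i^{\lambda_i}$ if $c=\ord_L d$. In either case $f^*$ is again monic of degree $c$ in $x_{n+1}$. Now I would repeat the argument of part (i) in the ring $B=K[x_1,\dots,x_{n+1}]/I_L^*$: since $I_L^*$ involves only $x_1,\dots,x_n$, we have $B\cong(K[x_1,\dots,x_n]/I_L^*)[x_{n+1}]=\gr_\mm K[L]\,[x_{n+1}]$, and $f^*$ is a monic polynomial of positive degree in $x_{n+1}$ over this ring, hence a nonzerodivisor (the leading coefficient $1$ is certainly a nonzerodivisor even though $\gr_\mm K[L]$ need not be a domain). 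The remaining claim $I_H^*=(I_L^*,f^*)$ is where the real content lies: one inclusion, $(I_L^*,f^*)\subseteq I_H^*$, is immediate from $I_H=(I_L,f)$ and the fact that initial forms of elements of an ideal lie in the ideal of initial forms. For the reverse inclusion I would use a standard-basis / Hilbert-function argument: from $I_H=(I_L,f)$ and part (i) one computes the Hilbert series of $\gr_\mm K[H]=S/I_H^*$, while $(I_L^*,f^*)$ defines a quotient with the same Hilbert series because $f^*$ is regular modulo $I_L^*$ (so passing from $B$ to $B/(f^*)$ multiplies the Hilbert series by $(1-t^c)$, matching the effect of the regular element $f$ on $K[x_1,\dots,x_{n+1}]/I_L$ at the level of associated graded rings); since $(I_L^*,f^*)\subseteq I_H^*$ and the two quotients have equal Hilbert function, the ideals coincide. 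Equivalently, one shows directly that $I_L\cup\{f\}$ is a standard basis of $I_H$.

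I expect the main obstacle to be the equality $I_H^*=(I_L^*,f^*)$, i.e. verifying that the given generators form a standard basis; the regularity statements themselves are soft once $f$ and $f^*$ are recognized as monic polynomials in the extra variable $x_{n+1}$. The cleanest route to the standard-basis claim is the Hilbert-function comparison sketched above, which only uses part (i), the explicit shape of $f^*$ under the hypothesis $c\le\ord_L d$, and the elementary fact that a monic polynomial in a new variable is a nonzerodivisor; I would also remark that when $c>\ord_L d$ one has $f^*=-\prod x_i^{\lambda_i}\in I_L^*$ up to sign only if that monomial actually lies in $I_L^*$, which need not hold, and in general the identity $I_H^*=(I_L^*,f^*)$ fails, explaining why the hypothesis in (ii) is needed.
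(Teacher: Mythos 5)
The survey does not prove this lemma; it only cites \cite[Theorem 2.3, Lemma 2.6]{HS-koszulcone}, so I am judging your argument on its own merits. Part (i) is correct and clean: $K[x_1,\dots,x_{n+1}]/I_L\cong K[L][x_{n+1}]$ is a domain and $f$ is nonzero there (it is monic in $x_{n+1}$ while $I_L$ involves only $x_1,\dots,x_n$). In part (ii), your identification of $f^*$ (using $\sum\lambda_i=\ord_L(d)\ge c$), the regularity of $f^*$ on $B=(\gr_\mm K[L])[x_{n+1}]$ via the monic-polynomial argument, and the inclusion $(I_L^*,f^*)\subseteq I_H^*$ are all correct.

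The one step stated too loosely is the claim that the Hilbert series of $\gr_\mm K[H]$ can be ``computed from $I_H=(I_L,f)$ and part (i)'' and equals $(1-t^c)$ times that of $B$ because this ``matches the effect of the regular element $f$ at the level of associated graded rings.'' Regularity of $f$ on $A=K[x_1,\dots,x_{n+1}]/I_L$ does \emph{not} by itself determine the Hilbert function of $\gr_\mm(A/(f))$ --- this is exactly what fails in the Shibuta-type examples of Section \ref{sec-atmost3}, where $I_H$ is generated by a regular sequence yet $\mu(I_H^*)$ is large. What regularity of $f$ and $f\in\nn_A^{\,c}$ do give for free is only the one-sided bound $\ell\bigl(K[H]/\mm^j\bigr)=\ell\bigl(A/(fA+\nn_A^{\,j})\bigr)\ge \ell(A/\nn_A^{\,j})-\ell(A/\nn_A^{\,j-c})$, since $(fA+\nn_A^j)/\nn_A^j$ is a quotient of $fA/f\nn_A^{\,j-c}\cong A/\nn_A^{\,j-c}$. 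The reverse bound comes from the surjection $B/(f^*)\twoheadrightarrow S/I_H^*$ that you already have, together with the regularity of $f^*$ on $B$, which gives $\ell\bigl(K[H]/\mm^j\bigr)\le\sum_{i<j}\bigl(HF_B(i)-HF_B(i-c)\bigr)=\ell(A/\nn_A^{\,j})-\ell(A/\nn_A^{\,j-c})$. The two bounds sandwich, forcing the surjection to be an isomorphism and hence $I_H^*=(I_L^*,f^*)$. So your strategy does close --- it is essentially the Valabrega--Valla/superregularity criterion (an element whose initial form is regular on the associated graded ring satisfies $\gr(A/(f))\cong\gr(A)/(f^*)$) --- but you should make the lower bound explicit rather than attribute the Hilbert-series factorization to regularity of $f$ alone. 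An alternative, more combinatorial route is to check directly that $\ord_H(c\ell+e_0d)=\ord_L(\ell)+e_0$ for $0\le e_0<c$, which is precisely where the hypothesis $c\le\ord_L(d)$ enters.
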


Here is the main result of this section.

\begin{Theorem}
\label{thm:betti-gluing}
Let $L$ be a numerical semigroup minimally generated by $a_1<\dots <a_n$, and $c>1$ and $d$ coprime integers with $d\in L \setminus\{a_1, \dots, a_n\}$. 
Denote $H=\langle cL, d\rangle$. 
\\ The following hold:
\begin{enumerate}
\item[(i)] $\beta_i( K[H])= \beta_i(K[L]) + \beta_{i-1}(K[L])$, for all $i>0$.
\item[(ii)] if $c\leq \ord_L(d)$, then $\beta_i( \gr_\mm K[H])=  \beta_i(\gr_\mm K[L])+ \beta_{i-1}(\gr_\mm K[L])$, for all $i>0$. 
\end{enumerate}
\end{Theorem}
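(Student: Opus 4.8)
The plan is to derive both statements from Lemma~\ref{lemma:regular} via the standard principle that a non-zerodivisor of positive degree lets one compute Tor by passing to the quotient. Write $R = K[x_1,\dots,x_{n+1}]$ with the grading induced by the generators $ca_1,\dots,ca_n,d$ of $H$ (so $\deg x_i = ca_i$ for $i\le n$ and $\deg x_{n+1}=d$). Since $I_H=(I_L,f)$ with $f = x_{n+1}^c - \prod_{i=1}^n x_i^{\lambda_i}$, and $I_L$ does not involve $x_{n+1}$, the ring $A := R/I_L$ is isomorphic to $K[L][x_{n+1}]$, a polynomial extension of $K[L]$ in one variable; hence $\beta_i^R(A) = \beta_i^{S}(K[L])$ for all $i$, where $S=K[x_1,\dots,x_n]$. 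By Lemma~\ref{lemma:regular}(i), $f$ is a homogeneous non-zerodivisor on $A$, and $R/I_H = A/fA$.

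For part (i): from the short exact sequence $0 \to A(-\deg f) \xrightarrow{\,\cdot f\,} A \to A/fA \to 0$ of graded $R$-modules, apply $-\otimes_R K$ and read off the long exact sequence in Tor. The connecting map $\Tor_i^R(A,K) \to \Tor_{i-1}^R(A(-\deg f),K)$ is multiplication by the image of $f$ in $K=R/\mathfrak{n}$, which is zero because $f\in\mathfrak{n}$ (it has positive degree with zero constant term). So the long exact sequence breaks into short exact sequences
$$
0 \to \Tor_i^R(A,K) \to \Tor_i^R(A/fA,K) \to \Tor_{i-1}^R(A,K) \to 0,
$$
giving $\beta_i(K[H]) = \beta_i^R(A/fA) = \beta_i^R(A) + \beta_{i-1}^R(A) = \beta_i(K[L]) + \beta_{i-1}(K[L])$ for $i>0$. (One should note that Betti numbers over $R$ and over $S/I_L$, resp. $R/I_L$, agree appropriately; the polynomial-extension remark handles this cleanly, and $\beta_0$ bookkeeping is immediate.)

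For part (ii): repeat the argument verbatim with $I_L$ replaced by $I_L^*$ and $f$ by $f^*$, now over $R$ with the standard grading, using $\gr_{\mm}K[H]\cong R/I_H^*$ and $\gr_{\mm}K[L]\cong S/I_L^*$. The hypothesis $c\le \ord_L(d)$ is exactly what Lemma~\ref{lemma:regular}(ii) needs to guarantee both that $I_H^* = (I_L^*, f^*)$ and that $f^*$ is a non-zerodivisor on $R/I_L^*$; once those two facts are in hand, the same short-exact-sequence-plus-vanishing-connecting-map computation yields $\beta_i(\gr_{\mm}K[H]) = \beta_i(\gr_{\mm}K[L]) + \beta_{i-1}(\gr_{\mm}K[L])$.

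The only genuinely delicate point is bookkeeping about \emph{which} ring the resolutions are taken over: $\beta_i(K[H])$ is defined via a resolution over $K[cL,d]$'s polynomial ring in $n+1$ variables, while $\beta_i(K[L])$ uses $n$ variables, so one must be careful that adjoining the variable $x_{n+1}$ (equivalently, that $A = K[L]\otimes_K K[x_{n+1}]$ is flat over $K[L]$) does not change Betti numbers — which it does not, since tensoring a minimal free resolution with $K[x_{n+1}]$ over $K$ keeps it minimal. Everything else is the textbook behaviour of Tor under killing a non-zerodivisor of positive degree, so there is no real obstacle beyond stating these identifications precisely.
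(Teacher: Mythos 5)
Your proof is correct, and it reaches the conclusion by a slightly different homological mechanism than the paper. The paper tensors a minimal free $R$-resolution $\mathcal{P}$ of $A=R/I_LR$ with the two-term complex $\mathcal{Q}\colon 0\to R\xrightarrow{f}R\to 0$ and invokes the K\"unneth formula, using Lemma~\ref{lemma:regular} to show $\Tor_1^R(R/I_LR,R/(f))=0$; this produces $\mathcal{P}\otimes\mathcal{Q}$ as an explicit minimal free resolution (a mapping cone) of $R/I_H$, from which the Betti numbers are read off as ranks. You instead take the short exact sequence $0\to A(-\deg f)\xrightarrow{\cdot f}A\to A/fA\to 0$ (exact on the left by Lemma~\ref{lemma:regular}(i)), apply $-\otimes_RK$, and observe that the maps induced by $\cdot f$ on $\Tor_\bullet^R(-,K)$ vanish because $f\in\nn$ annihilates $K$; the long exact sequence then splits into the displayed short exact sequences and the count follows. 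This is marginally more elementary: it needs no K\"unneth formula and no minimality check on a tensor-product complex, since $\beta_i=\dim_K\Tor_i^R(-,K)$ is intrinsic; what it gives up is the explicit description of the resolution of $K[H]$ that the paper's construction provides. Both arguments share the same essential inputs: $I_H=(I_L,f)$, the flat base change $S\to R$ identifying $\beta_i^R(A)$ with $\beta_i^S(K[L])$, and Lemma~\ref{lemma:regular} (with part (ii) supplying $I_H^*=(I_L^*,f^*)$ and the regularity of $f^*$ for the tangent-cone statement). One small labelling slip: the map that is literally "multiplication by $f$" and hence zero is the induced map $\Tor_i^R(A(-\deg f),K)\to\Tor_i^R(A,K)$ coming from the first arrow of the short exact sequence, not the connecting homomorphism $\Tor_i^R(A/fA,K)\to\Tor_{i-1}^R(A(-\deg f),K)$; the vanishing of the former is what forces the long exact sequence to break up as you state, so the conclusion is unaffected.
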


\begin{proof} Both statements are a consequence of K\"unneth's formula  
 for the homology of the tensor product of two complexes, combined with Lemma \ref{lemma:regular}.
We give a detailed proof of part (i), the other one is proved similarly.

Denote $R=S[x_{n+1}]=K[x_1, \dots, x_{n+1}]$.
Let $\mathcal{L}$ be a minimal free $S$-resolution of $S/I_L$.  
Tensoring this over $S$ by $R$ we get the complex $\mathcal{P}$ which is a minimal free $R$-resolution of $R/I_LR$.
We consider the complex $\mathcal{Q}: 0\to R \stackrel{f}{\rightarrow} R \to 0$. 
Clearly $H_i(\mathcal{Q})= R/(f)$ if $i=0$, and $0$ otherwise.
Also, $\mathcal{Q}_n$ is free (and flat) for any $n$, and  its image through the differential is either $0$, if $n\neq 0$, or $(f)R \cong R$, which is a free (and flat) $R$-module. 

From K\"unneth's formula (\cite[Theorem 3.6.3]{Weibel}), for any $n$ there exists a short exact sequence
$$
0\to \bigoplus_{i+j=n}H_i(\mathcal{P})\otimes H_j(\mathcal{Q}) 
\to H_n (\mathcal{P}\otimes\mathcal{Q})
\to \bigoplus_{i+j=n-1} \Tor_1^R(H_i(\mathcal{P}), H_j(\mathcal{Q}))) \to 0.
$$ 

For $n=0$ this gives $H_0(\mathcal{P}\otimes\mathcal{Q}) \cong  H_0(\mathcal{P}) \otimes H_0(\mathcal{Q})\cong R/(I_L, f)$, while for $n>1$ the previous exact sequence  gives $H_n(\mathcal{P}\otimes\mathcal{Q})=0$.

Plugging in $n=1$ we get the exact sequence 
$$
0 \to H_1(\mathcal{Q}) \to H_1(\mathcal{P}\otimes\mathcal{Q}) \to \Tor_1^R(H_0(\mathcal{P}), H_0(\mathcal{Q})) \to 0,
$$
hence  $ H_1(\mathcal{P}\otimes\mathcal{Q}) \cong \Tor_1^R(R/I_LR, R/(f))$. 
Note that by Lemma \ref{lemma:regular}, $f$ is regular on $R$ and on $R/I_L R$,
hence by \cite[Proposition 1.1.5]{BH}, the complex obtained by tensoring with $R/(f)$ the free resolution $\mathcal{P}$ of $R/I_LR$, is again exact. 
Therefore, $\Tor_1^R(R/I_LR, R/(f))=0$ and $\mathcal{P}\otimes\mathcal{Q}$ is exact and  a minimal 
 free resolution of $R/(I_L, f)$.
 
Clearly, for $n>0$ one has $(\mathcal{P}\otimes\mathcal{Q})_n= (P_n\otimes Q_0) \oplus (P_{n-1}\otimes Q_1) \cong P_n \oplus P_{n-1}$.
This shows that $\beta_i( K[H])= \beta_i(K[L]) + \beta_{i-1}(K[L])$ for all $i>0$.
\end{proof}

An immediate consequence of part (i) is the following result of Fr\"oberg, Gottlieb and H\"aggkvist \cite{FGH}.
\begin{Corollary}
\label{cor:type-gluing}
(\cite[Proposition 8]{FGH}) If $H=\langle cL, d \rangle$ is a simple gluing, then $K[H]$ and $K[L]$, and also $H$ and $L$ have the same type.
\end{Corollary}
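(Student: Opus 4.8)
The plan is to deduce Corollary~\ref{cor:type-gluing} directly from Theorem~\ref{thm:betti-gluing}(i) together with the fact that for a one-dimensional Cohen--Macaulay graded ring the type is the rank of the last nonzero free module in the minimal graded free resolution over the polynomial ring. First I would recall that $K[L]\cong S/I_L$ with $S=K[x_1,\dots,x_n]$ has $\projdim_S K[L]=n-1$ by the Auslander--Buchsbaum formula, since $K[L]$ is a one-dimensional domain; similarly $K[H]\cong R/I_H$ with $R=K[x_1,\dots,x_{n+1}]$ has $\projdim_R K[H]=n$. In particular, in the notation of Section~\ref{sec:algebra}, $\type(K[L])=\beta_{n-1}(K[L])$ and $\type(K[H])=\beta_n(K[H])$, because $K[L]$ and $K[H]$ are Cohen--Macaulay (every $K[H]$ is) and the type is the rank of the last module in the minimal free resolution, as recalled in Section~\ref{sec:type} via \cite[Lemma 3.5]{AoyamaGoto}.

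Next I would apply Theorem~\ref{thm:betti-gluing}(i) with $i=n$, which gives
$$
\beta_n(K[H])=\beta_n(K[L])+\beta_{n-1}(K[L]).
$$
Since $\projdim_S K[L]=n-1$, the term $\beta_n(K[L])$ vanishes, so $\beta_n(K[H])=\beta_{n-1}(K[L])$. Combining this with the identifications of the previous paragraph yields $\type(K[H])=\beta_n(K[H])=\beta_{n-1}(K[L])=\type(K[L])$. This proves that $K[H]$ and $K[L]$ have the same type.

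Finally, to conclude that the semigroups $H$ and $L$ themselves have the same type, I would invoke Theorem~\ref{thm:sametype}, which identifies the arithmetic invariant $\type(H)=|PF(H)|$ with the Cohen--Macaulay type $\type(K[H])$ for any numerical semigroup and any field $K$; likewise $\type(L)=\type(K[L])$. Chaining these equalities gives $\type(H)=\type(K[H])=\type(K[L])=\type(L)$, as desired.

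I do not anticipate a serious obstacle here: the argument is essentially bookkeeping, the only subtle point being to make sure the homological degree at which one reads off the type is handled consistently on both sides, i.e. that $\projdim_S K[L]=n-1$ forces $\beta_n(K[L])=0$ so that the shift $i\mapsto i-1$ in Theorem~\ref{thm:betti-gluing}(i) exactly matches the shift $n-1\mapsto n$ in the projective dimensions caused by adjoining the new variable $x_{n+1}$. One should also note that we do not need the hypothesis $c\le\ord_L(d)$ for this corollary, since only part (i) of Theorem~\ref{thm:betti-gluing} is used; that hypothesis is only relevant for the tangent cone statement in part (ii).
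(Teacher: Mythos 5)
Your argument is correct and is essentially the same as the paper's: the paper likewise reads off $\type(K[H])=\beta_n(K[H])$ from Cohen--Macaulayness, applies Theorem~\ref{thm:betti-gluing}(i) at $i=n$ using $\beta_n(K[L])=0$, and finishes with Theorem~\ref{thm:sametype}. Your added remarks on the projective-dimension bookkeeping and on not needing $c\le\ord_L(d)$ are accurate but just make explicit what the paper leaves implicit.
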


\begin{proof} Denoting $n=\embdim(L)$, since $K[H]$ is a Cohen-Macaulay $S$-module of dimension $1$,  
by Theorem \ref{thm:betti-gluing}(i) the type of $K[H]$ is $\beta_{n}(K[H])= \beta_{n}(K[L])+ \beta_{n-1}(K[L])= \beta_{n-1}(K[L])$, hence $K[H]$ and $K[L]$ have the same type. The rest follows  from Theorem \ref{thm:sametype}.
\end{proof}

\begin{Corollary}
\label{cor:cm-gluing} 
If $H=\langle cL, d \rangle$ is a simple gluing and $c\leq \ord_L(d)$, then $\gr_\mm K[H]$ is Cohen-Macaulay or Gorenstein if and only if  $\gr_\mm K[L]$ is Cohen-Macaulay, respectively Gorenstein.
\end{Corollary}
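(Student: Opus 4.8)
The plan is to read off both claims as immediate corollaries of the Betti-number formulas in Theorem~\ref{thm:betti-gluing}, together with the structural facts recorded in Lemma~\ref{lemma:regular}. First I would recall that $\gr_\mm K[H]$ is Cohen-Macaulay precisely when $\depth \gr_\mm K[H]=1=\projdim$-deficit is realized, equivalently when $\projdim_R (R/I_H^*)=\embdim(H)-1=n$, where $R=K[x_1,\dots,x_{n+1}]$; and similarly $\gr_\mm K[L]$ is Cohen-Macaulay iff $\projdim_S(S/I_L^*)=n-1$. Since $c\leq \ord_L(d)$, part~(ii) of Lemma~\ref{lemma:regular} applies and gives $I_H^*=(I_L^*,f^*)$ with $f^*$ regular on $S[x_{n+1}]/I_L^*$; running the same K\"unneth argument as in the proof of Theorem~\ref{thm:betti-gluing}(ii) shows that a minimal free resolution of $R/I_H^*$ is obtained by tensoring a minimal free resolution of $R/I_L^*R$ with the length-one complex $0\to R\xrightarrow{f^*}R\to 0$. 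Hence $\projdim_R(R/I_H^*)=\projdim_S(S/I_L^*)+1$, and so $\gr_\mm K[H]$ is Cohen-Macaulay if and only if $\gr_\mm K[L]$ is.

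For the Gorenstein equivalence I would argue as in Corollary~\ref{cor:type-gluing}. A Cohen-Macaulay standard graded $K$-algebra is Gorenstein iff its Cohen-Macaulay type equals $1$, and the type is the rank of the last nonzero module in a minimal free resolution. So first observe that by the previous paragraph the two rings are Cohen-Macaulay simultaneously; assuming they are, put $p=\projdim_S(S/I_L^*)=n-1$, so $\projdim_R(R/I_H^*)=n$. From the tensor-product description of the minimal free resolution, $(\mathcal{P}\otimes\mathcal{Q})_n\cong P_n\oplus P_{n-1}=P_{n-1}$ (since $P_n=0$ once $p=n-1$), exactly as in the proof of Theorem~\ref{thm:betti-gluing}. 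Therefore $\beta_n(\gr_\mm K[H])=\beta_{n-1}(\gr_\mm K[L])$, i.e.\ the two tangent cones have the same type, and one is Gorenstein iff the other is. (One may equally invoke Theorem~\ref{thm:betti-gluing}(ii) directly: $\beta_n(\gr_\mm K[H])=\beta_n(\gr_\mm K[L])+\beta_{n-1}(\gr_\mm K[L])$, and $\beta_n(\gr_\mm K[L])=0$ precisely when $\gr_\mm K[L]$ is Cohen-Macaulay, in which case $\beta_n(\gr_\mm K[H])=\beta_{n-1}(\gr_\mm K[L])$.)

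The only genuinely delicate point is making sure the reduction is clean on the graded level: $I_L^*$ and $f^*$ are homogeneous for the standard grading on $R$, $\gr_\mm K[L]\cong S/I_L^*$ and $\gr_\mm K[H]\cong R/I_H^*$ as graded rings, and the K\"unneth short exact sequence is graded, so all Betti and projective-dimension comparisons respect the grading. Given Lemma~\ref{lemma:regular}(ii) this is routine; the heavy lifting has already been done inside Theorem~\ref{thm:betti-gluing}. So in effect the proof is: quote Lemma~\ref{lemma:regular}(ii) to get $I_H^*=(I_L^*,f^*)$ with $f^*$ a nonzerodivisor; quote the regularity/Rees-type argument (\cite[Proposition 1.1.5]{BH}) to see that Cohen-Macaulayness and type transfer exactly as projective dimension $+1$ and last Betti number unchanged; conclude both equivalences. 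I expect no real obstacle beyond bookkeeping with the grading and the two possible values of $\beta_n(\gr_\mm K[L])$.
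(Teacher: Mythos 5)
Your proposal is correct and follows essentially the same route as the paper: the Cohen--Macaulay equivalence comes from Auslander--Buchsbaum together with the Betti-number formula $\beta_{n+1}(\gr_\mm K[H])=\beta_{n+1}(\gr_\mm K[L])+\beta_{n}(\gr_\mm K[L])=\beta_{n}(\gr_\mm K[L])$ of Theorem~\ref{thm:betti-gluing}(ii), and the Gorenstein equivalence from the equality of types (last nonzero Betti numbers) as in Corollary~\ref{cor:type-gluing}. The only difference is cosmetic: you re-run part of the K\"unneth argument to phrase things via projective dimension, where the paper simply quotes Theorem~\ref{thm:betti-gluing}(ii).
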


\begin{proof} Let $n=\embdim(L)$. By the Auslander-Buchsbaum formula \cite[Theorem 1.3.3]{BH}, 
$\gr_\mm K[H]$ and  $\gr_\mm K[L]$ are Cohen-Macaulay if and only if $\beta_{n}(\gr_\mm K[H])=0$, respectively
$\beta_{n+1}(\gr_\mm K[L])=0$.  The conclusion follows from Theorem \ref{thm:betti-gluing}(ii) and Corollary \ref{cor:type-gluing}.
\end{proof}

\begin{Remark} 
{\em The condition $c\leq \ord_L(d)$ in Corollary \ref{cor:cm-gluing} can not be removed. Indeed, for $h>1$ let $L= \langle 3h, 3h+1, 6h+3 \rangle$ 
and $H= \langle (h^2+1)L, 3h^2 \rangle$ from Example \ref{ex:shibuta}. Clearly, $\ord_L(3h^2)=h < h^2+1$.
As noted in Examples \ref{ex:shibuta-type}  and \ref{ex:shibuta} respectively, 
the tangent cone $\gr_\mm K[L]$ is not Cohen-Macaulay, however $\gr_\mm K[H]$ is  Cohen-Macaulay.
}
\end{Remark}

\begin{Remark}
{\em
A simple gluing $H=\langle cL, d \rangle$ with $c \leq \ord_L(d)$ is called a nice extension in \cite{AM}. 
This concept is generalized in \cite[Definition 2.1]{AMS} by Arslan, Mete and \c{S}ahin  to nice gluings of arbitrary numerical semigroups.
In \cite[Theorem 2.6]{AMS} the authors show that a nice gluing of two semigroups $H_1$ and $H_2$ with Cohen-Macaulay tangent cones, also has a Cohen-Macaulay tangent cone.
This gives another proof for the Corollary \ref{cor:cm-gluing} above.
\c{S}ahin \cite{Sahin} studies nice extensions for arbitrary toric varieties, and he formulates the results in this section in that generality. 

The Betti numbers of $K[H]$ when $H$ is a numerical semigroup obtained by gluing have also been considered in  \cite{GimSri} 
and \cite{Numata-gluing}.
}
\end{Remark}

\section{Some examples}
\label{sec:firstexamples}

For some classes of numerical semigroups  the   resolution of $K[H]$ and of $\gr_\mm K[H]$ is known.

\subsection{Complete intersections}\label{subsec:ci}
When $K[H]$ is a complete intersection ring, its defining ideal is generated by $a=\embdim(H)-1$ binomials which are a regular sequence in $S$, 
hence the Koszul complex on these binomials provides a  minimal $S$-free resolution of $S/I_H$, see \cite[Corollary 1.6.14]{BH}. Consequently, the Betti sequence for
$K[H]$ is
$$
\left(1, {a \choose 1}, {a \choose 2}, \dots, {a \choose a} \right).
$$
The numerical semigroups $H$ for which $K[H]$ is  CI have been characterized combinatorially by Delorme \cite{Delorme} in terms of gluings, see also Section~\ref{sec:betti-gluings}.
For instance, when $H$ is generated by a geometric sequence, the rings $\gr_\mm K[H]$ and   $K[H]$ are CI, see \cite[Proposition 3.4]{HS-koszulcone}.

\subsection{Arithmetic sequences}
\label{sec:arithmetic}
When $H=\langle a, a+d, \dots, a+ (n-1)d \rangle$ is generated by an arithmetic sequence with $\gcd(a,d)=1$, $n\leq a$,  
Gimenez, Sengupta and Srinivasan in \cite{GSS} show that if we denote by  $b$ the unique integer such that $a \equiv b \mod (n-1)$ and $1\leq b \leq n-1$, then
\begin{eqnarray}
\label{eq:betti}
\beta_i(K[H])= i { n-1 \choose i+1} +
\begin{cases}
(n- b-i +1) {n-1 \choose  i-1}  &\text{ if }  1 \leq i \leq n-b, \\
(i-n+ b) {n-1 \choose i} & \text{ if } n-b < i \leq n-1,
\end{cases}
\end{eqnarray}
see also the preprint \cite{OT-as} of Oneto and Tamone for an independent, yet similar approach.
 
The same values for the Betti numbers of $\gr_\mm K[H]$ had been obtained by Sharifan and Zaare-Nahandi in \cite{Sha-Za-1}, 
and the equality $\beta_i(K[H])=\beta_i(\gr_\mm K[H])$, for all $i$, was noted by them in \cite{Sha-Za-2}.
Independently, in \cite{HS}  using just the relations defining $K[H]$,  Herzog and the author   also showed that $\gr_\mm K[H]$ has the same Betti numbers as $K[H]$.

Formulas similar to \eqref{eq:betti} hold when $H$ is generated by a generalized arithmetic sequence,
 i.e. $H=\langle a, ha+d, ha+2d, \dots, ha+(n-1)d \rangle$ for some positive integers $h,d$ with $\gcd(a,d)=1$.
Details, and also the explicit minimal free resolution of $K[H]$ may be found in  \cite{Sha-Za-1}, \cite{GSS} and \cite{OT}.

\section{Embedding dimension at most $3$}
\label{sec-atmost3}

\subsection{The $2$-generated case.}
 If $H=\langle a_1, a_2 \rangle$ with $a_1<a_2$ and $\gcd(a_1, a_2)=1$, then $K[H]\cong K[x,y]/(x^{a_2}-y^{a_1})$ and $\gr_\mm K[H] \cong K[x,y]/(y^{a_1})$, hence both algebras have the Betti sequence $(1,1)$.

\subsection{The $3$-generated case.} 
\label{sec:edim3}
If $\embdim(H)=3$, by Herzog's work in \cite{He-semi} one has that $\mu(I_H) \leq 3$, hence the possible Betti sequences of $K[H]$ are $(1,2,1)$ if it is CI, or $(1,3,2)$ if it is not.
Moreover, Herzog \cite{He-reg} and Robbiano-Valla \cite{RV} show that $\gr_\mm K[H]$ is Cohen-Macaulay if and only if $\mu(I_H^*) \leq 3$, and therefore, under such an extra hypothesis the
possible Betti sequences of $\gr_\mm K[H]$ are again $(1,3,2)$ and $(1,2,1)$.

On the other hand, there are examples of Shibuta (\cite[Example 5.5]{Goto}) of $3$-generated semigroups $H$ where $\mu(I_H^*)$ is arbitrarily  large. 
Extending that family, one has the following
\begin{Example} {\em (\cite[Section 3.3]{HS})
\label{ex:shibuta-type}
   For $a> 3$ and 
\begin{equation*}
\label{eq:bigtgcone}
H_a= \langle a,a+1, 2a+3 \rangle 
\end{equation*}
 one obtains that $\mu(I_{H_a}^*)= \lfloor \frac{a-1}{3} \rfloor +3 $, by explicitly computing the ideal of initial forms 
$$
I_{H_a}^*= (xz, z^{k+1}) + y^{\varepsilon} \cdot(y^3, z)^k.
$$ 
Here we wrote $a=3k+\varepsilon$ with $k=\lfloor \frac{a-1}{3} \rfloor$  and $1 \leq \varepsilon \leq 3$.
Numerical experiments with Singular \cite{Sing} indicate that, in this notation,  the whole Betti sequence of 
$\gr_\mm K[H_a]$  is  $(1, k+3, 2k+2, k)$.
}
\end{Example}

\section{Large Betti numbers in embedding dimension $4$}
\label{sec-edim4}

When $\embdim(H)\geq 4$, there is no upper bound depending on $\embdim(H)$ alone for the number of defining equations and the rest of the Betti numbers of $K[H]$, as the following examples show. 

\subsection{Bresinsky semigroups}
\label{sec:bres-semi}
  
For  $h\geq 2$,  Bresinsky \cite{Bres} considered the numerical semigroup 
\begin{equation*}
\label{eq:semi}
B_h=\langle (2h-1)2h, (2h-1)(2h+1), 2h(2h+1), 2h(2h+1)+2h-1\rangle.
\end{equation*}
and he showed that the number of defining equations for $K[B_h]$ is  at least $2h$. We now compute its whole Betti sequence.

\begin{Theorem}
\label{thm:main-brez}
With notation as above, both algebras $K[B_h]$ and $\gr_\mm K[B_h]$ have  the Betti sequence 
$$(1,4h, 8h-4, 4h-3).
$$
\end{Theorem}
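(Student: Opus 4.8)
The plan is to pin down the tangent cone first, where the standard grading tames the combinatorics, and then transfer the answer to $K[B_h]$. Write $n_1<n_2<n_3<n_4$ for the four generators of $B_h$. They satisfy $n_1+n_4=n_2+n_3$ and $(2h+1)n_1=2h\,n_2=(2h-1)n_3$, and, interpolating between the resulting pure powers, two "ladders" of mixed binomials; following Bresinsky's analysis these assemble into an explicit list $g_1,\dots,g_{4h}$. First I would show this list generates $I_{B_h}$, most conveniently by checking that $\{g_1,\dots,g_{4h}\}$ is a standard basis for the $\mm$-adic filtration --- e.g.\ via Buchberger's criterion applied to the $x_0$-homogenizations, as in Section~\ref{sec:algebra} --- since this simultaneously exhibits $I_{B_h}^*$: its generators are the initial forms $g_i^*$, several of which collapse from binomials to monomials while the quadric $x_1x_4-x_2x_3$ survives. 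A degreewise count then shows the $g_i$ minimally generate $I_{B_h}$ and the $g_i^*$ minimally generate $I_{B_h}^*$, so $\beta_1(K[B_h])=\beta_1(\gr_\mm K[B_h])=4h$.

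Next I would prove that $\gr_\mm K[B_h]$ is Cohen--Macaulay. By the discussion in Section~\ref{sec:algebra} this is equivalent to the residue of $t^{\mult(B_h)}=t^{n_1}$ being a nonzerodivisor, i.e.\ to $x_1$ being a nonzerodivisor on $S/I_{B_h}^*$; with the generators of $I_{B_h}^*$ in hand this reduces to checking that no associated prime of $S/I_{B_h}^*$ contains $x_1$. Then $\projdim_S\gr_\mm K[B_h]=\projdim_S K[B_h]=3$, both Betti sequences have the shape $(1,4h,\beta_2,\beta_3)$, and, both quotients being lower-dimensional $S$-modules, their alternating Betti sums vanish, so $\beta_3=1-4h+\beta_2$. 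It thus suffices to determine $\beta_2$ and verify $\beta_2=8h-4$ (equivalently, that the type $\beta_3$ equals $4h-3$).

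For this I would construct the minimal graded free resolution of $S/I_{B_h}^*$ explicitly, exploiting its structure: resolve the ladder/pure-power part and form the mapping cone against the Koszul complex of $x_1x_4-x_2x_3$ (or iterate mapping cones), guided by the pattern for small $h$ and shown to persist. Exactness would be certified by the Buchsbaum--Eisenbud acyclicity criterion, and minimality is clear since all matrix entries lie in $\nn$; counting free summands yields $(1,4h,8h-4,4h-3)$, so $\type(B_h)=4h-3$. To pass to $K[B_h]$, I would lift this resolution termwise to a $B_h$-graded complex of free $S$-modules over $S/I_{B_h}$: since $\{g_i\}$ is a standard basis, the $I_{B_h}^*$-differentials are the initial forms of the lifted ones, so the lift is again minimal; combined with the inequality $\beta_i(K[B_h])\le\beta_i(\gr_\mm K[B_h])$ it cannot drop rank, hence is a minimal free resolution of $K[B_h]$ with the same Betti numbers --- equivalently, $B_h$ is of homogeneous type. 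This gives the asserted sequence $(1,4h,8h-4,4h-3)$ for $K[B_h]$ too.

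The genuinely hard step, I expect, is the explicit construction and exactness proof for the resolution of $S/I_{B_h}^*$ --- in particular organizing the $8h-4$ first syzygies uniformly in $h$ --- rather than the standard-basis check or the regularity of $x_1$, which are routine though laborious. A workable alternative to an ad hoc complex is to compute each multigraded Betti number $\beta_{i,\lambda}$ directly from the reduced homology of a Koszul-type complex on the four variables (for the toric ring, of the squarefree divisor complex, cf.\ Theorem~\ref{thm: betti-simplicial}): on four vertices the relevant complexes are at most the boundary of a tetrahedron, so the homology is immediate, and the only real work is the bookkeeping of which degrees $\lambda$ contribute $\widetilde H_0$, $\widetilde H_1$, or $\widetilde H_2$.
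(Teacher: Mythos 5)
Your skeleton is sound and overlaps substantially with the paper's: establish the $4h$ binomial generators as a minimal standard basis, deduce $\mu(I_{B_h})=\mu(I_{B_h}^*)=4h$ and the Cohen--Macaulayness of the tangent cone, use $\projdim=3$ and the vanishing of the alternating Betti sum to reduce everything to one unknown, and conclude that $B_h$ is of homogeneous type. But the decisive step --- actually producing the number $8h-4$ (equivalently the type $4h-3$) --- is not carried out. You defer it to ``construct the minimal graded free resolution explicitly \dots guided by the pattern for small $h$ and shown to persist,'' certified by Buchsbaum--Eisenbud, or alternatively to ``bookkeeping'' of which degrees $\lambda$ contribute reduced homology to the squarefree divisor complexes. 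In both variants the entire content of the theorem is packed into a step you acknowledge is the hard one and for which no argument is given; uniformity in $h$ of a conjectured resolution, or the enumeration of all $\lambda\in B_h$ with $\widetilde H_1(\Delta_\lambda)\neq 0$ or $\widetilde H_2(\Delta_\lambda)\neq 0$, is exactly where the work lies. As written, this is a genuine gap, not a routine verification.

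The missing idea in the paper's proof is that no explicit resolution is needed. Since $x$ is regular on $S/I_{B_h}$ (a domain) and on $S/I_{B_h}^*$ (by Cohen--Macaulayness), and since the two ideals have the same image $\bar I$ in $\bar S=K[y,z,t]$ --- an essentially monomial ideal of the form $(yz)+(t^{2h-1},zt^{2h-2},\dots,z^{2h-1})+(y^2t^{2h-2},\dots,y^{2h})$ --- the Betti numbers of $S/I_{B_h}$, $S/I_{B_h}^*$ and $\bar S/\bar I$ all coincide. The quotient $\bar S/\bar I$ is Artinian, so its last Betti number is $\dim_K\Soc(\bar S/\bar I)$, which is computed by an elementary and uniform-in-$h$ listing of the monomials outside $\bar I$ killed by $(y,z,t)$; one finds $4h-3$ socle elements, and $\beta_2=8h-4$ then falls out of the Euler characteristic. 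This also disposes of your transfer-to-$K[B_h]$ step more cleanly than your lifting argument (whose claim that the lifted resolution ``is again minimal'' is not automatic from the standard-basis property alone): both algebras specialize, modulo the common regular element $x$, to the same Artinian ring. If you replace your ``construct the resolution'' step with this reduction and the socle count, your proof closes.
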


\begin{proof} 
We fix an integer $h\geq 2$ and for brevity we denote $I=I_{B_h}\subset S=K[x,y,z,t]$.
Based on computations started in \cite{Bres}, it is proven in \cite[Section 3.3]{HS} that 
$$
 \{xt-yz\} \cup  \{ z^{i-1}t^{2h-i}-y^{2h-i}x^{i+1}: 1\leq i \leq 2h \} \cup \{x^{2h+1-j}z^j-y^{2h-j}t^j: 0\leq j \leq 2h-2\}
$$
is a minimal generating set and a minimal standard basis of $I$, and that $\gr_\mm K[B_h]$ is Cohen-Macaulay.
%The latter implies that $x$ is a regular element on $S/I^*$. 

We let $\bar{S}=K[y,z,t]$, $\bar{I}$ be the canonical projection of $I\subset S$ onto $\bar{S}$, and similarly for $\bar{I^*}$. 
It is immediate to check that
\begin{equation*}
\label{eq:ibar2}
\bar{I}= \bar{I^*} = (yz) +  ( t^{2h-1}, zt^{2h-2}, \dots, z^{2h-1}) +  ( y^2t^{2h-2},  y^3t^{2h-3},  \dots ,  y^{2h-1}t  , y^{2h}).
\end{equation*}
Clearly $x$ is regular on $S/I$ which is a domain, but also on $S/I^*$ since $\gr_\mm K[B_h]$ is Cohen-Macaulay, as noted above.
Therefore, the Betti numbers for  $S/I$, $S/I^*$,  and $\bar{S}/\bar{I}$ coincide. The conclusion follows from the next lemma. 
\end{proof}

\begin{Lemma}
\label{lemma:ibar}
Consider the ideal $J \subset \bar{S}=K[y,z,t]$ defined as 
\begin{equation}
\label{eq:ibar}
J=(yz)+(t^h, zt^{h-1}, \dots, z^h)+(y^2 t^{h-1}, y^3 t^{h-2}, \dots, y^{h+1}).
\end{equation}
The Betti sequence of $\bar{S}/J$ is $(1, 2h+2, 4h, 2h-1)$.
\end{Lemma}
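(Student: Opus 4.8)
The plan is to avoid resolving $J$ directly and instead compare $\bar S/J$ with the two much simpler quotients $\bar S/(J:z)$ and $\bar S/(J+(z))$, both of which are essentially two-variable monomial quotients, and then to recover the Betti numbers of $\bar S/J$ from the long exact $\Tor$-sequence attached to multiplication by $z$, using the $\ZZ^3$-grading to control the connecting maps. (I write $J=J_h$ to display the dependence on $h$; one checks at once that the $1+(h+1)+h=2h+2$ monomials in \eqref{eq:ibar} form a minimal generating set when $h\ge 2$, the only range we need.)

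First I would record the two auxiliary ideals. Reading off the generators of $J$ gives
$$
J:z=(y)+(z,t)^{h-1},\qquad J+(z)=(z)+\bigl(t^h,\,y^2t^{h-1},\,y^3t^{h-2},\,\dots,\,y^{h+1}\bigr),
$$
so multiplication by $z$ produces the short exact sequence of graded $\bar S$-modules
$$
0\longrightarrow\bigl(\bar S/(J:z)\bigr)(-1)\stackrel{\cdot z}{\longrightarrow}\bar S/J\longrightarrow\bar S/(J+(z))\longrightarrow 0 .
$$
Next I would resolve the two outer terms. Here $\bar S/(J:z)=\bar S/\bigl((z,t)^{h-1}+(y)\bigr)$ with $y$ a nonzerodivisor modulo $(z,t)^{h-1}$, and $\bar S/(J+(z))=K[y,t]/(t^h,y^2t^{h-1},\dots,y^{h+1})$ with $z$ a nonzerodivisor. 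The resolution of $K[z,t]/(z,t)^{h-1}$ has Betti numbers $(1,h,h-1)$ (Hilbert--Burch, i.e. the standard resolution of a power of the maximal ideal in two variables), and the monomial ideal $(t^h,y^2t^{h-1},\dots,y^{h+1})\subset K[y,t]$ plainly has $h+1$ minimal generators, hence Betti numbers $(1,h+1,h)$. Tensoring each of these with the Koszul complex on the extra variable, I obtain
$$
\beta(\bar S/(J:z))=(1,\,h+1,\,2h-1,\,h-1),\qquad\beta(\bar S/(J+(z)))=(1,\,h+2,\,2h+1,\,h),
$$
together with their explicit multigraded shifts.

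Finally I would run the long exact sequence
$$
\cdots\to\Tor_i\bigl(\bar S/(J+(z)),K\bigr)\stackrel{\delta_i}{\to}\Tor_{i-1}\bigl((\bar S/(J:z))(-1),K\bigr)\to\Tor_{i-1}(\bar S/J,K)\to\cdots .
$$
A degree count forces $\delta_1$ to map onto the one-dimensional $\Tor_0((\bar S/(J:z))(-1),K)$, which sits in degree $1$ while $\Tor_0(\bar S/J,K)$ sits in degree $0$; hence $\rank\delta_1=1$. The crux is that $\delta_2$ and $\delta_3$ vanish: each $\delta_i$ is a map of $\ZZ^3$-graded modules, and a bookkeeping of the multidegrees occurring in the (minimal, explicitly known) resolutions above shows that source and target have disjoint multigraded supports. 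In homological degree $2$ they are separated by the $z$-degree, and within $z$-degree $1$ by the fact that $y$ is not among the minimal generators of $(t^h,y^2t^{h-1},\dots,y^{h+1})$; in homological degree $3$ they are separated by the $y$-degree, which is $\ge 2$ on the source and $\le 1$ on the target. Granting $\rank\delta_1=1$ and $\delta_2=\delta_3=0$, the sequence breaks into short pieces and yields $\beta_0(\bar S/J)=1$, $\beta_1(\bar S/J)=\beta_1(\bar S/(J:z))+\beta_1(\bar S/(J+(z)))-1$, and $\beta_i(\bar S/J)=\beta_i(\bar S/(J:z))+\beta_i(\bar S/(J+(z)))$ for $i=2,3$; substituting the numbers above gives $(1,2h+2,4h,2h-1)$. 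The one genuinely delicate point is the multidegree bookkeeping forcing $\delta_2=\delta_3=0$; the rest is routine, and note that the argument is direct rather than inductive — the two outer quotients are resolved from scratch.
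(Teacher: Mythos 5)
Your argument is correct, and it takes a genuinely different route from the paper. The paper's proof is more elementary: it reads off $\beta_1=\mu(J)=2h+2$ from the minimal monomial generators, notes that $\bar S/J$ is Artinian hence Cohen--Macaulay of projective dimension $3$ so that $\beta_3=\dim_K\Soc(\bar S/J)$, computes the socle explicitly as the $2h-1$ classes of $zt^{h-2},\dots,z^{h-1}$ and $yt^{h-1},\dots,y^h$, and then recovers $\beta_2=4h$ from the vanishing of the alternating sum of Betti numbers. Your decomposition along multiplication by $z$ buys more: it produces all the graded (indeed multigraded) Betti numbers of $\bar S/J$, not just their totals, at the cost of the connecting-map analysis. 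I verified your two auxiliary ideals, the Betti numbers $(1,h+1,2h-1,h-1)$ and $(1,h+2,2h+1,h)$, and the vanishing of $\delta_2$ and $\delta_3$; all are correct. One small point in the bookkeeping for $\delta_2$: after listing the multidegrees, the target $\Tor_1\bigl((\bar S/(J:z))(-1)\bigr)$ lives in multidegrees $(1,1,0)$ and $(0,a+1,b)$ with $a+b=h-1$, while the $z$-degree-one part of the source $\Tor_2(\bar S/(J+(z)))$ lives in $(0,1,h)$ and $(j+1,1,h-j)$ for $1\le j\le h$. Your stated reasons dispose of the $z$-degree-zero part of the source and of the target class $(1,1,0)$, but you should also note that the remaining candidate coincidence, $(0,1,h)$ against $(0,1,h-1)$, is excluded by the $t$-degree; with that one extra line the disjointness of supports is complete and the proof closes.
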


\begin{proof}
Clearly $\beta_0(\bar{S}/J)=1$ and $\beta_1(\bar{S}/J)= \mu(J)=2h+2$.	

Since $\bar{S}/J$ has finite length, it is Cohen-Macaulay. Hence its projective dimension as an $\bar{S}$-module equals  $3$ 
and its last nonzero Betti number satisfies $\beta_3(\bar{S}/J)= \dim_K \Soc(\bar{S}/J)$.
Note that 
$$ 
\Soc(\bar{S}/J)=\left\{ \widehat{f} \in \bar{S}/J: (y,z,t)\cdot \widehat{f} =\hat{0} \right\}
$$ 
has a $K$-basis consisting of the residue classes of the monomials $m \notin J$ such that
$(y,z,t)\cdot m \subset J$. 

Let $m=y^a z^b t^c $ be such a monomial. Clearly $ab=0$.

If $a=0$, then $b>0$, otherwise $m=y^c \notin J$ implies $c\leq h$, and together with $ym=y t^c \in J$ we get a contradiction.
Moreover, $m=z^bt^c \notin J$ yields $b+c \leq h-1$. 
Since $zm\in J$, one gets that $b+c+1 \geq h$, hence $b+c=h-1$ and $m\in \mathcal{S}_1= \{ zt^{h-2}, z^2t^{h-3}, \dots, z^{h-1} \}$.

If $a>0$, then $b=0$. Since $m=y^at^c \notin J$ we get $c<h$. Also, $ym=y^{a+1}t^c \in J$ implies $a+1+c \geq h+1$, i.e. $a+c \geq h$.
From $tm=y^at^{c+1} \in J$ we get that either $a=1$ and $c+1=h$, or $a\geq 2$ and $a+c+1 = h+1$.  
We obtain that $m\in \mathcal{S}_2= \{ yt^{h-1}, y^2t^{h-2}, \dots, y^h  \}$.
It is easy to check that  $\widehat{m} \in \Soc(\bar{S}/J)$ for all $m\in \mathcal{S}_1\cup \mathcal{S}_2 $, therefore $\beta_3(\bar{S})= 2h-1$.

From the relation $\sum_i \beta_i(\bar{S}/J)=0$ we see that $\beta_2(\bar{S})= 4h$, and this finishes the computation of the Betti sequence.
\end{proof}

\subsection{Arslan semigroups}

In \cite[Proposition 3.2]{Arslan} Arslan shows that for the family of semigroups
\begin{equation}
\label{eq:arslan}
A_h= \langle h(h+1), h(h+1)+1, (h+1)^2, (h+1)^2+1 \rangle, \text{ where } h\geq 2, 
\end{equation}
 the defining ideal of $K[A_h]$ is
\begin{multline}
\label{eq:eq-arslan}
I_{A_h}= ( x^{h-i}z^{i+1}-y^{h-i+1}t^i: 0\leq i < h) + \\ 
			 (z^i t^{h-i}-x^{i+1}y^{h-i}: 0\leq i \leq h)+  (xt-yz).
\end{multline}
and  later on he proves that $\gr_\mm K[A_h]$ is Cohen-Macaulay via some considerations involving Gr\"obner bases.
\begin{comment}
In the original paper \cite{Arslan} there is one more (non-minimal) equation listed for the generators of $I_{A_h}$, 
which is of the form in the first bracket above,  for $i=h$. Namely,
$$z^{h+1}-yt^h=z (z^h-x^{h+1})+x (x^h z-y^{h+1})-y (t^h-xy^h).$$
\end{comment}

With notation as before, going modulo $x$ we obtain
\begin{equation*}
\bar{I}_{A_h}= (y^{h-i+1}t^i: 0\leq i < h)+ (z^i t^{h-i} : 0\leq i \leq h) + (yz).
\end{equation*}
This is a monomial ideal whose generators, naturally forming a standard basis,
may be clearly lifted to binomials in $I_{A_h}$ with the same initial degree.
According to Herzog's criterion \cite[Theorem 1]{He-reg}, as formulated in \cite[Lemma 1.2]{HS}, 
we get that the generators in \eqref{eq:eq-arslan} for $I_{A_h}$ form a standard basis as well,
and that $x$ is regular on $\gr_\mm K[A_h]$. Hence we reobtain  Arslan's result in  \cite[Proposition 3.4]{Arslan}:
$$
I_{A_h}^*=(x^{h-i}z^{i+1}-y^{h-i+1}t^i: 0\leq i <h) +  
			 (z^i t^{h-i} : 0\leq i \leq h)+  (xt-yz).
$$

Modulo $x$, the latter ideal is the same as $\bar{I}_{A_h}$ and it coincides with the ideal $J$ in \eqref{eq:ibar}. 
Using  Lemma \ref{lemma:ibar}, we derive the next result.

\begin{Proposition}
\label{prop:betti-arslan}
Let $h\geq 2$ and $A_h$ be the Arslan semigroup defined in \eqref{eq:arslan}. 
The semigroup ring $K[A_h]$ and its tangent cone $\gr_\mm K[A_h]$ have the same Betti sequence $(1,2h+2, 4h, 2h-1)$.
\end{Proposition}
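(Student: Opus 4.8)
The plan is to cut both $K[A_h]$ and $\gr_\mm K[A_h]$ down modulo the generator $x$ corresponding to the multiplicity $h(h+1)$, and to exploit the fact---already flagged just before the statement---that both quotients equal the same Artinian ring $\bar S/J$, where $\bar S=K[y,z,t]$ and $J$ is the ideal of~\eqref{eq:ibar}. First I would observe that $x$ is a nonzerodivisor on $K[A_h]=S/I_{A_h}$, since this ring is a domain and $x\mapsto t^{h(h+1)}\neq 0$, and that $x$ is a nonzerodivisor on $\gr_\mm K[A_h]\cong S/I_{A_h}^*$, which is precisely what the paragraph preceding the statement establishes: there, Herzog's criterion shows that the generators of~\eqref{eq:eq-arslan} form a standard basis and that $x$ is regular on the tangent cone. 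Tensoring a minimal graded free $S$-resolution of $S/I_{A_h}$, resp.\ of $S/I_{A_h}^*$, with $\bar S=S/xS$ then yields, by \cite[Proposition~1.1.5]{BH}, a minimal graded free $\bar S$-resolution of $\bar S/\bar I_{A_h}$, resp.\ of $\bar S/\bar I_{A_h}^*$, where a bar denotes the image of an ideal under $S\twoheadrightarrow\bar S$. Hence $\beta_i(K[A_h])=\beta_i(\bar S/\bar I_{A_h})$ and $\beta_i(\gr_\mm K[A_h])=\beta_i(\bar S/\bar I_{A_h}^*)$ for all $i$.

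Next I would make the identity $\bar I_{A_h}=\bar I_{A_h}^*=J$ explicit. Reducing~\eqref{eq:eq-arslan} modulo $x$ leaves $(y^{h-i+1}t^i:0\le i<h)+(z^it^{h-i}:0\le i\le h)+(yz)$, and re-indexing the first two families rewrites this as $(yz)+(t^h,zt^{h-1},\dots,z^h)+(y^2t^{h-1},y^3t^{h-2},\dots,y^{h+1})$, which is exactly $J$; the displayed formula for $I_{A_h}^*$ reduces modulo $x$ to the very same monomial ideal. Combining this with the first step gives $\beta_i(K[A_h])=\beta_i(\bar S/J)=\beta_i(\gr_\mm K[A_h])$ for every $i$, and Lemma~\ref{lemma:ibar} evaluates this common Betti sequence as $(1,2h+2,4h,2h-1)$, which is the claim.

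I do not expect a genuine obstacle, because the two substantive ingredients are imported: the regularity of $x$ on the tangent cone is handled by Herzog's criterion in the preceding discussion, and the arithmetic heart of the matter---the socle computation giving $\beta_3(\bar S/J)=2h-1$, after which $\beta_2(\bar S/J)=4h$ is forced by $\sum_i(-1)^i\beta_i(\bar S/J)=0$---is carried out in Lemma~\ref{lemma:ibar}. The single point requiring care is the index bookkeeping in $\bar I_{A_h}=\bar I_{A_h}^*=J$: one must verify that reducing modulo $x$ neither deletes nor duplicates a minimal monomial generator and that, after the shift of indices, the surviving generators are exactly those listed for $J$ in~\eqref{eq:ibar}.
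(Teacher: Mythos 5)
Your proposal is correct and follows essentially the same route as the paper: reduce modulo the regular element $x$ (regularity on the tangent cone coming from Herzog's criterion as in the preceding discussion), identify $\bar I_{A_h}=\bar I_{A_h}^*$ with the ideal $J$ of \eqref{eq:ibar} after reindexing, and invoke Lemma \ref{lemma:ibar}. The index check goes through: the families $(y^{h-i+1}t^i)_{0\le i<h}$ and $(z^it^{h-i})_{0\le i\le h}$ together with $yz$ give exactly the $2h+2$ minimal generators listed for $J$.
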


These computations show that in the class of $4$-generated numerical semigroups, 
even among those with Cohen-Macaulay tangent cone,
 the Betti numbers of $K[H]$ may be arbitrarily large.
Using the gluing construction described in Section \ref{sec:betti-gluings}, 
we can exhibit examples of semigroups with arbitrarily large Betti numbers in any  higher embedding dimension.

\section{Embedding dimension 4, continued}
\label{sec:4continued}
\subsection{AA-sequences}
A sequence of integers is called an  almost arithmetic (AA) sequence  if it consists of an arithmetic sequence and of one more element.
Any $3$-generated semigroup is generated by an AA-sequence.

 Kumar~Roy, Sengupta and Tripathi \cite{KST} described the minimal resolution of $K[H]$ when $H$ is minimally generated by an AA-sequence with $4$ elements.
Similar to the results in Section \ref{sec:arithmetic}, they obtain that only the following eight  Betti sequences are possible:
 $(1,3,3,1)$, $(1,4,5,2)$, $(1,4,6,3)$, $ (1,5,5,1)$, $(1,5,6,2)$, $(1,5,7,3)$, $(1,6,8,3)$, $(1,6,9,4)$.

\subsection{Symmetric semigroups}
Assume  $H$ is a $4$-generated symmetric  semigroup. 
 If $H$ is CI, the Betti sequence of $K[H]$ is $(1,3,3,1)$, as seen in Section \ref{subsec:ci}. 

When $H$ is not CI, Bresinsky \cite{Bres-gore} described its generators and he explicitly computed  the defining relations for $K[H]$.
Based on that parametrization,   Barucci, Fr\"oberg and \c{S}ahin \cite{BFS} described the minimal free resolution of $K[H]$.
The Betti sequence is always $(1,5,5,1)$ in that case.

\begin{Remark}
{\em
Micale and Olteanu \cite{MO} notice that  in embedding dimension at least five, more than one Betti sequence is possible for $K[H]$ when $H$ is symmetric and not CI. Indeed, if $a$ and $d$ are coprime positive integers such that $a \equiv 2 \mod (4)$, 
letting $H=\langle a, a+d, a+2d, a+3d, a+4d \rangle$, it follows from Eq.~\eqref{eq:betti} that the Betti sequence of $K[H]$ is $(1,9,16,9,1)$.

On the other hand, the semigroup $H=\langle 12,20, 28,30, 35\rangle$ is symmetric, not CI and 
the Betti sequence of $K[H]$ is $(1,6,10,6,1)$. A large class of symmetric semigroups with this Betti sequence is provided by \cite[Proposition 5.1]{MO}. 
}
\end{Remark}

When $H$ is $4$-generated, symmetric and not CI, it is a natural question to find the equations of the tangent cone,   since we know the five equations defining $K[H]$.
Despite the effort (see \cite{Katsabekis}, \cite{AM}), explicit formulas are not   available in all cases.

Arslan, Katsabekis and Nalbandiyan \cite{AKN}  gave  necessary and sufficient conditions for a $4$-generated symmetric and not CI
  semigroup to have a Cohen-Macaulay tangent cone, in terms of Bresinsky's parametrization; see also \cite[Theorem 2.4]{Katsabekis} for a more compact formulation of their result.

Recently, under these hypotheses for $H$, building on the results in  \cite{AKN}, Katsabekis \cite{Katsabekis} 
shows that in several cases when $\gr_\mm K[H]$ is Cohen-Macaulay, then $\mu(I_H^*) \in \{5,6\}$ 
by explicitly finding a standard basis for $I_H$.
It is likely that this program can be completed to find the possible Betti sequences of the tangent cone, at least in those several cases. We ask if that is a finite list or not. We also wonder if  $\mu(I_H^*)$ can be determined in all the cases when $\gr_\mm K[H]$ 
is Cohen-Macaulay, compare with Example \ref{ex:shibuta}.

Here are some examples. 

\begin{Example}
{\em 
We used Singular \cite{Sing} to   compute  the Betti sequences of $\gr_\mm K[H]$ for some   $4$-generated symmetric and non-CI
  semigroups $H$ given in \cite[Examples 3.14, 3.21, 3.28, 4.3]{AKN}: 
\begin{enumerate}
\item[(a)]
for $H$ any of  $\langle 1199, 2051, 2352, 3032 \rangle$, $\langle 627, 1546, 1662, 3377\rangle$, or \\  $\langle 813, 1032, 1240, 1835\rangle$  
		 the tangent cone $\gr_\mm K[H]$ is Cohen-Macaulay and its Betti sequence is $(1,6,8,3)$;
\item[(b)]
 for $H=\langle 2m+1, 2m+3, 2m^2+m-2, 2m^2+m-1 \rangle$ with $m\geq 4$,  the ideal $I_H^*$ is explicitly computed %in \cite[Example 4.3]{AKN}, 
  and one gets that
       $\gr_\mm K[H]$ is not Cohen-Macaulay, and its Betti sequence is $(1,8,14,9,2)$.
\end{enumerate}
}
\end{Example}

\begin{Example}
{\em
Arslan and Mete (see \cite[Example 2.1]{AM}) show that for $m\geq 2$ the semigroup  
$$
H_m= \langle m^3+m^2-m, m^3+2m^2+m-1, m^3+3m^2+2m-2, m^3+4m^2+3m-2\rangle
$$ 
is symmetric and not CI, and that $\gr_\mm K[H_m]$ is Cohen-Macaulay by explicitly computing 
$$
I_{H_m}^*=(zt^{m-1}, y^{m+2}, z^m, t^m, y^mt^{m-1}).
$$
Denoting $J=I_{H_m}^* \mod x$, it is easy to check that  $\{ y^{m-1}z^{m-1}t^{m-2} , y^{m-1}t^{m-1} \}$ is a $K$-basis for $\Soc(\bar{S}/J)$, 
hence arguing as in the proof of Lemma \ref{lemma:ibar} we get that $\gr_\mm K[H_m]$ 
has the Betti sequence $(1,5,6,2)$.
}
\end{Example}

\medskip

We next consider the possible number of equations for $\gr_\mm K[H]$ when $H$ is CI and $\embdim(H)=4$.

Recall  that in  embedding dimension $3$, $\gr_\mm K[H]$ is CM if and only if $\mu(I_H^*) \leq 3$.
Working to extend this result, Shibuta \cite[Theorem 3.1]{Shibuta} shows   that if $H$ is a $4$-generated CI numerical semigroup 
and $\mu(I_H^*) \leq 4$, 
then $\gr_\mm K[H]$ is Cohen-Macaulay.  The paper \cite{Shibuta}  is also the source of the following example.

\begin{Example} 
\label{ex:shibuta}
{\em (Shibuta, \cite[Proposition 3.14]{Shibuta}) For $h\geq 2$  the semigroup
\begin{equation}
H_h=\langle 3h^2, 3h(h^2+1), (3h+1)(h^2+1), (6h+3)(h^2+1) \rangle
\end{equation}
is CI with $I_{H_h}=(y^h-x^{h^2+1}, yt-z^3, t^h-x^{2(h^2+1)}y) \subset S=K[x,y,z,t]$ and $I_{H_h}^*=(yt)+(z^3,y)^h+(z^3, t)^h$. 

Clearly $x$ is regular in $S/I_{H_h}^*$ and $\gr_\mm K[H_h]$ is Cohen-Macaulay. 
We shall find its Betti numbers as in the proof of Lemma \ref{lemma:ibar}.
Letting $J= I_{H_h}^* \mod x$ and $\bar{S}=K[y,z,t]$, it is easy to check that 
$$
\{ y^{h-1}z^{3i-1}: 1\leq i \leq h\} \cup \{ z^{3i-1}t: 1\leq i \leq h-1 \} 
$$ is 
a $K$-basis for $\Soc(\bar{S}/J)$. Therefore the Betti sequences for $\gr_\mm K[H_h]$ and for $\bar{S}/J$ are $(1, 2h+2, 4h, 2h-2)$.
}
\end{Example}

\begin{Example}
\label{ex:katsabekis}
{\em (Katsabekis,  \cite[Example 3.6]{Katsabekis}) For $m\geq 1$ the semigroup
$$
H_m=\langle 8m^2+6, 8m^2+10, 12m^2+15, 20m^2+15 \rangle
$$
is CI, with $I_{H_m}=(x^5-t^2, y^3-z^2, x^{2m^2}t-y^{2m^2}z)$. Using the methods described in Section \ref{sec:algebra}
one gets that  $I_{H_m}^*= ( z^2, t^2,      x^{2m^2}t-  y^{2m^2}z, y^{2m^2+3}t, y^{4m^2+3})$,  and the tangent cone is not Cohen-Macaulay. 
%% since  (I_{H_m}^*: x)= I_{H_m}^* + (x^{2m^2-1}zt) \neq  I_{H_m}^*.
Testing with Singular (\cite{Sing}) for small values of $m$ we always get the  Betti sequence  $(1,5,9,6,1)$ for $\gr_\mm K[H_m]$.
}
\end{Example}

Starting with a $3$-generated  CI   semigroup $H$ whose tangent cone has large Betti numbers, by a simple gluing (see Section \ref{sec:betti-gluings})
 we can obtain a $4$-generated CI  whose tangent cone   has large Betti numbers, as well.

\begin{Example}
\label{ex:gluing-3ci}
{\em 
For $m\geq 2$, the semigroup $H_m= \langle 3m, 3m+1, 6m+3\rangle$ in Example \ref{ex:shibuta-type} is CI and the Betti sequence for $\gr_\mm K[H_m]$ is $(1, m+2, 2m, m-1)$.
Letting 
$$
L_m= \langle 2H_m, 6m+1 \rangle =\langle 6m, 6m+2, 12m+6, 6m+1 \rangle,
$$
we have that $L_m$ is CI. By Theorem \ref{thm:betti-gluing}, the tangent cone $\gr_\mm K[L_m]$, which  is not Cohen-Macaulay, 
has the Betti sequence $(1, m+3, 3m+2, 3m-1, m-1)$.
}
\end{Example}

\begin{Remark}
{\em
Following Examples \ref{ex:shibuta} and \ref{ex:gluing-3ci}, 
we expect that arbitrarily large Betti numbers may be obtained for $\gr_\mm K[H]$ also when $H$ is $4$-generated, symmetric and not CI. 
}
\end{Remark}

\subsection{Pseudosymmetric semigroups}
A semigroup $H$ is called pseudosymmetric if $F(H)$ is even and $PF(H)=\{F(H)/2, F(H) \}$.
 Komeda \cite{Komeda} characterized the generators of any $4$-generated { pseudosymmetric} numerical semigroup $H$ and he found the defining relations of $K[H]$.
Based on that,   Barucci, Fr\"oberg and \c{S}ahin \cite{BFS} described the minimal free resolution of $K[H]$. Consequently, its Betti sequence is $(1,5,6,2)$. 
 
The defining equations or the Betti numbers of the tangent cone are not known for this class of semigroups. 
\c{S}ahin and \c{S}ahin (\cite{SahinSahin}) describe the Cohen-Macaulay property of $\gr_\mm K[H]$ in terms of Komeda's parametrization.

\subsection{Further extensions}
Assume $H$ is a  $4$-generated  semigroup which is not CI.
Eto \cite{Eto} describes the almost symmetric such semigroups: their generators, the defining equations of $K[H]$ and also the minimal resolution of $K[H]$. When the type of $H$ is $2$, $H$ is pseudosymmetric and  this was discussed above.
Moscariello \cite{Moscariello}  had proven that otherwise $H$ must have type equal to $3$, and Eto shows that the possible Betti sequences are  $(1,6,8,3)$ and $(1,7,9,3)$.  This completes some very partial results   in \cite{Numata}.

The  class of  nearly Gorenstein  semigroups has been recently introduced in \cite{HHS} and it  contains the almost symmetric ones.
\begin{Problem}
{\em Find a parametrization of the $4$-generated nearly Gorenstein  semigroups and describe the minimal resolution of their semigroup ring.}
\end{Problem}

\begin{Problem}
{\em Describe the Betti numbers and the minimal resolution of $\gr_\mm K[H]$ when $H$ is a $4$-generated semigroup which is (almost) symmetric or nearly Gorenstein.}
\end{Problem}

\medskip

{\bf Acknowledgement}.
We gratefully acknowledge the use of the  Singular \cite{Sing} software and of the \texttt{numericalsgps} package \cite{Num-semigroup} in GAP \cite{GAP} for the development of this paper.
The   author was  supported  by a fellowship at the Research Institute of the University of Bucharest (ICUB). 
 
We thank  Ignacio Garc\'ia-Marco for sending corrections to an initial version of this paper,  to Francesco Strazzanti for useful pointers to the literature and to an anonymous referee  for suggestions that improved the presentation. A great moral debt is owed to J\"urgen Herzog since our joint projects served as an introduction to the topic of this survey and also as  a motivation to write it.

\medskip

{}


\begin{thebibliography}{}

\bibitem{Cocoa} J.~Abbott, A. M.~Bigatti, G.~Lagorio, \textit{{CoCoA-5}: a system for doing {C}omputations in {C}ommutative {A}lgebra}. \url{ http://cocoa.dima.unige.it}.
 
\bibitem{AoyamaGoto}  Y.~Aoyama, S.~Goto, \textit{ On the type of graded Cohen-Macaulay rings},
J. Math. Kyoto Univ.  {\bf 15} (1975), 19--23.
 
\bibitem{Arslan} F.~Arslan, \textit{Cohen-Macaulayness of tangent cones}, Proc. Amer. Math. Soc. {\bf 128} no. 8 (1999), 2243--2251.

\bibitem{AM} F.~Arslan,   P.~Mete, \textit{Hilbert functions of Gorenstein monomial curves}, Proc. Amer. Math. Soc. {\bf 135} (2007), 1993--2002.

\bibitem{AMS} F.~Arslan, P.~Mete, M.~\c{S}ahin, \textit{Gluing and Hilbert functions of monomial curves}, Proc. Amer. Math. Soc. {\bf 137} (2009), 2225--2232.

\bibitem{AKN} F.~Arslan, A.~Katsabekis, M.~Nalbandiyan, \textit{On the Cohen-Macaulayness of tangent cones of monomial curves in $\AA^4(K)$}, Preprint 2017,  21 pp,
 	arXiv:1512.04204v4 [math.AC]. 


\bibitem{BF} V.~Barucci,  R.~Fr\"oberg, \textit{One-dimensional almost Gorenstein rings}, J. Algebra  {\bf 188} (1997), 418--442.


\bibitem{BarF} V.~Barucci, R.~Fr\"oberg, \textit{Associated graded rings of one-dimensional analytically irreducible rings}, J. Algebra {\bf 304} (2006), 349--358.


\bibitem{BFS} V.~Barucci, R.~Fr\"oberg, M. \c{S}ahin, \textit{On free resolutions of some semigroup rings},
 J. Pure and Applied Algebra {\bf 218} (2014), 1107--1116.
 
\bibitem{Bres-gore} H.~Bresinsky, \textit{Symmetric semigroups of integers generated by 4 elements}, Manuscr. Math. {\bf 17} (1975), 205--219.

\bibitem{Bres} H.\ Bresinsky, \textit{On prime ideals with generic zero $x_i= t^{n_i}$}, Proc. Amer. Math. Soc. {\bf 47} no.2 (1975), 329--332.


\bibitem{Bryant} L.~Bryant, \textit{Goto Numbers of a Numerical Semigroup Ring and the Gorensteiness of Associated
Graded Rings}, Comm. Algebra {\bf 38} (2010), 2092--2128.

\bibitem{BH-semi} W.~Bruns, J.~Herzog, \textit{Semigroup rings and simplicial complexes}, J. Pure Appl. Algebra
{\bf 122} (1997), no. 3, 185--208.

\bibitem{BH} W.~Bruns, J.~Herzog, \textit{Cohen-Macaulay Rings}, Revised Ed., Cambridge Stud. Adv. Math., vol. 39, Cambridge University Press, Cambridge, 1998.

\bibitem{CM} A.~Campillo, C.~Marijuan, \textit{Higher order relations for a numerical semigroup}, S\'em. Th\'eor.
Nombres Bordeaux {\bf (2) 3} (1991), no. 2, 249--260.


\bibitem{CN} M.P.~Cavaliere, G. Niesi, \textit{On monomial curves and Cohen-Macaulay type},  manuscripta math. {\bf 42} (1983), 147 -- 159.

\bibitem{CS} M.~Cimpoea\c{s}, D.I.~Stamate, \textit{On intersections of complete intersection ideals}, J. Pure Appl. Algebra {\bf 220} (2016), 3702--3712.

\bibitem{ConawayAll} R.~Conaway, F.~Gotti, J.~Horton, C.~O'Neill, R.~Pelayo, M.~Williams, B.~Wissman, 
\textit{Minimal presentations  of shifted numerical monoids},  Int. J. Algebra Comput.  {\bf 28} (2018), 53--68.
 
\bibitem{DAMS} M.~D'Anna, V.~Micale, A.~Sammartano, 
\textit{When the associated graded ring of a semigroup ring is Complete Intersection},  J. Pure Appl. Algebra {\bf 217} (2013), 1007--1017.

\bibitem{Sing} W.\ Decker, G.-M.\ Greuel, G.\ Pfister, H.\ Sch{\"o}nemann,
\newblock {\sc Singular} {3-1-6} --- {A} computer algebra system for polynomial computations.
\newblock \url{http://www.singular.uni-kl.de} (2012).


\bibitem{Num-semigroup}  M.~Delgado, P.~A.~Garc\'ia-S\'anchez, J.~Morais, \textit{numericalsgps}: a GAP package on numerical
semigroups. (\url{http://www.gap-system.org/Packages/numericalsgps.html}).


\bibitem{Delorme} C.\ Delorme, \textit{Sous-mono\"ides d'intersection compl\`ete de N}, Ann. Sci. Ecole Norm. Sup. (4) {\bf 9} (1976), no. 1, 145--154.

\bibitem{Denham} G.~Denham, \textit{Short generating functions for some semigroup algebras}, 
The Electronic Journal of Combinatorics  {\bf 10} (2003),  Paper R36.

\bibitem{Eis} D.\ Eisenbud, \textit{Commutative Algebra with a View Toward Algebraic Geometry}, Graduate Texts in Mathematics {\bf 150}, Springer, 1995.


\bibitem{Eis-syz}  D.\ Eisenbud, \textit{The Geometry of Syzygies}, Graduate Texts in Mathematics {\bf 229}, Springer, 2005.

\bibitem{EH} V.\ Ene, J.\ Herzog, \textit{Gr\"obner bases in commutative algebra},  Graduate Studies in Mathematics {\bf 130},
American Mathematical Society, 2012.

\bibitem{Eto} K.~Eto,  \textit{Almost Gorenstein monomial curves in affine four space}, J. Algebra  {\bf 488} (2017), 362--387. 
 
\bibitem{FGH} R.~Fr\"oberg, C.~Gottlieb, R.~H\"aggkvist, \textit{On numerical semigroups}, Semigroup Forum {\bf 35} (1987), 63--83.

\bibitem{Froberg-Porto} R.~Fr\"oberg, \textit{On the homology of semigroup rings}, slides for the Iberian Meeting on Numerical Semigroups--Porto 2008, available at \url{ http://cmup.fc.up.pt/cmup/ASA/numsgps_meeting/slides/froberg.pdf}.

\bibitem{GAP}  The GAP Group, GAP -- Groups, Algorithms, and Programming, Version 4.7.5, 2014. (\url{http://www.gap-system.org}).

\bibitem{Garcia} A.~Garcia, \textit{Cohen-Macaulayness of the associated graded of a semigroup ring}, Comm. Algebra {\bf 10} (1982), no. 4, 393--415. 

\bibitem{GRR}  I.Garc\'ia-Marco, J.L.~Ram\'irez~Alfons\'in, {\ O}.~R\o dseth, \textit{Numerical semigroups II: pseudo-symmetric AA-semigroups},  J.  Algebra {\bf 470} (2017), 484--498.
 
%\bibitem{GSS1} P.\ Gimenez, I.\ Sengupta, I.\ Srinivasan, \textit{Minimal free resolutions for certain affine monomial curves}, in: A.\ Corso, C.\ Polini (Eds.), Commutative Algebra and Its Connections to Geometry, PASI 2009,  Contemp. Math., vol {\bf 555},  Amer. Math. Soc., 2011, pp. 87--95.

\bibitem{GSS} P.~Gimenez, I.~Sengupta, H.~Srinivasan, \textit{Minimal graded free resolutions for monomial curves defined by arithmetic sequences}, J. Algebra {\bf 338} (2013), 294--310.

\bibitem{GimSri} P.~Gimenez,  H.~Srinivasan, \textit{The structure of the minimal free resolution of semigroup rings obtained by gluing},  Preprint 2017, 	arXiv:1710.07237 [math.AC],   19 pp.

\bibitem{Goto} S.~Goto, W.~Heinzer, M.~Kim, \textit{The leading ideal of a complete intersection of height two. II},
J. Algebra {\bf 312} (2007), 709--732.

%\bibitem{GTT} S.~Goto, R.~Takahashi, N.~Taniguchi,   \textit{Almost Gorenstein rings--towards a theory of higher dimension}, J. Pure Appl. Algebra {\bf 219} (2015), 2666--2712.

\bibitem{M2}  D.R.~Grayson, M.E.~Stillman, \textit{Macaulay 2, a software system for research in algebraic
geometry}, available at \url{http://www.math.uiuc.edu/Macaulay2/}.

\bibitem{He-semi} J.~Herzog, \textit{Generators and relations of Abelian semigroups and semigroup rings}, Manuscripta Math. {\bf 3} (1970), 175--193.

\bibitem{He-reg} J.~Herzog,   \textit{When is a regular sequence super regular?}, Nagoya Math. J. {\bf 83} (1981), 183--195.


\bibitem{HHS} J.~Herzog, T.~Hibi,  D.I.~Stamate, \textit{The trace of the canonical module}, Preprint 2016,  arXiv:1612.02723 [math.AC], 38 pp.  

\bibitem{HRV}  J.~Herzog, M. E.~Rossi, G.~Valla, \textit{On the depth of the symmetric algebra}, Trans. AMS {\bf 296} (1986), 577--606. 

\bibitem{HS} J.~Herzog, D.I.~Stamate, \textit{On the defining equations of the tangent cone of a numerical semigroup ring}, 
 J. Algebra {\bf 418} (2014), 8--28.  

\bibitem{HS-koszulcone} J.~Herzog, D.I.~Stamate, \textit{Quadratic numerical semigroups and the Koszul property}, 
  Kyoto J. Math. {\bf 57} (2017), 585--612.

\bibitem{HeWa} J.~Herzog,  K.-i.~Watanabe, \textit{Almost symmetric numerical semigroups and almost Gorenstein
semigroup rings}, RIMS K\^oky\^uroku {\bf 2008} (2016), 107--120.

\bibitem{JZ} R.~Jafari, S.~Zarzuela, \textit{Homogeneous numerical semigroups}, 
Semigroup Forum (2018). https://doi.org/10.1007/s00233-018-9941-6.
%Preprint 2016, 	arXiv:1603.01078 [math.AC],  25 pp.

\bibitem{JS} A.V.~Jayanthan, H.~Srinivasan, \textit{Periodic occurence of complete intersection monomial curves}, Proc. Amer. Math. Soc. {\bf 141} (2013), 4199--4208.


\bibitem{Katsabekis} A.~Katsabekis, \textit{Equations defining tangent cones of Gorenstein monomial curves},  Preprint 2016,  arXiv:1608.07100v1 [math.AC], 19 pp.

\bibitem{Komeda} J.~Komeda, \textit{On the existence of Weierstrass points with a certain semigroup generated by $4$ elements}, Tsukuba J. Math. {\bf 6} (1982), 237--279.

\bibitem{KST} A.~Kumar~Roy, I.~Sengupta, G.~Tripathi, \textit{Minimal graded free resolution for monomial curves in $\AA^4$ defined by almost arithmetic sequences},
 Comm. Algebra {\bf 45} (2017), 521--551. %,  arXiv:1503.02687 [math.AC].
 
\bibitem{Kunz} E.~Kunz, \textit{The value-semigroup of a one-dimensional Gorenstein ring},
Proc. Amer. Math. Soc. {\bf 25} (1970),  748--751.

\bibitem{Marzullo} A.~Marzullo, \textit{On the periodicity of the first betti number of the semigroup rings under translations},
J. Ramanujan Math. Soc {\bf  28} (2)  (2013).	

\bibitem{MO} V.~Micale, A.~Olteanu, \textit{On the Betti numbers of some semigroup rings}, Le Matematiche Vol. {\bf LXVII} (2012) – Fasc. I,  145--159.
  
\bibitem{Moscariello} A.~Moscariello, \textit{On the type of an almost Gorenstein monomial curve}, J. Algebra {\bf 456} (2016), 266--277.

\bibitem{NNW} H.~Nari, T.~Numata, K.-i~Watanabe, \textit{Genus of numerical semigroups generated by three elements}, J. Algebra {\bf 358} (2012), 67--73.

\bibitem{Nari} H.~Nari, \textit{Symmetries on almost symmetric numerical semigroups}, Semigroup Forum {\bf 86} (2013) 140--154.

\bibitem{Numata} T.~Numata, \textit{Almost symmetric numerical semigroups generated by four elements}, Proc. Inst. Nat. Sci., Nihon Univ. {\bf 48} (2013), 197--207.

%\bibitem{Numata-thesis} T.~Numata, \textit{Almost symmetric numerical semigroups with small number of generators}, Ph.D.~Thesis, Nihon University, Tokyo, 2015.

\bibitem{Numata-gluing} T.~Numata, \textit{A variation of gluing of numerical semigroups}, Semigroup Forum {\bf 93} (2016), 152--160.

\bibitem{ONeill-Pelayo-Apery} C.~O'Neill, R.~Pelayo, \textit{Ap\'ery sets of shifted numerical monoids}, 
Adv. Appl. Math. {\bf 97} (2018) 27--35.
 
\bibitem{OT-as}  A.~Oneto, G.~Tamone, \textit{Explicit minimal resolution for certain monomial curves}, Preprint 2013, arXiv:1312.0789 [math.AC],  8 pp.

\bibitem{OT} A.~Oneto, G.~Tamone, \textit{Syzygies of GS monomial curves and Weierstrass property}, Semigroup Forum {\bf 92} (2016), 258--273.

%\bibitem{Patil} D.P.~Patil, \textit{Minimal sets of generators for the relation ideals of certain monomial curves}, Manuscripta Math. {\bf 80} (1993), 239--248.

\bibitem{Peeva} I.~Peeva, \textit{Graded Syzygies},  Springer, 2011.

\bibitem{Ramirez} J.L.~Ram\'irez~Alfons\'in, \textit{The Diophantine Frobenius problem}, 
Oxford Lecture Series in Mathematics and its Applications, {\bf30}, Oxford University Press, 2005.

\bibitem{RV} L.~Robbiano, G.~Valla, \textit{On the equations defining tangent cones}, Math. Proc. Camb. Phil. Soc. {\bf 88} (1980), 281--297.

\bibitem{Rosales} J.C.~Rosales, \textit{On presentations of subsemigroups of $\NN^n$}, Semigroup Forum {\bf 55} (1997), 152--159.

%\bibitem{RS-1}J. C.~Rosales, P. A.~Garc\'ia-S\'anchez, \textit{On numerical semigroups with high embedding dimension}, J. Algebra, 203, 567--578. (1998).

\bibitem{NS-book} J.C.~Rosales, P.A.~Garcia-S\'anchez, \textit{Numerical Semigroups},   Developments in Mathematics Series Vol 20, Springer, 2009.

%bibitem{Sally-cm} J.D.~Sally, \textit{On the associated graded ring of a local Cohen-Macaulay ring}, J. Math. Kyoto Univ. {\bf 17} (1977), no. 1, 19-–21.
 
%bibitem{Sally}  J.D.\ Sally, \textit{Good embedding dimensions for Gorenstein singularities}, Math. Ann. {\bf 249} (1980),  95--106.

\bibitem{Sha-Za-1} L.\ Sharifan, R.\ Zaare-Nahandi, \textit{Minimal free resolution of the associated graded ring of monomial
curves of generalized arithmetic sequences}, J. Pure Appl. Algebra {\bf 213} (2009), 360--369.

\bibitem{Sha-Za-2} L.\ Sharifan, R.\ Zaare-Nahandi, \textit{A class of monomial curves of homogeneous type}, in: Extended Abstracts of the
22nd Iranian Algebra Seminar, 31th Jan--2nd Feb 2012,
Hakim Sabzevari University, Sabzevar, Iran, 255--258.

\bibitem{Shibuta} T.~Shibuta, \textit{Cohen-Macaulayness of almost complete intersection tangent cones},  J. Algebra {\bf 319} (2008), 3222--3243.  

\bibitem{S-3semi} D.I.~Stamate, \textit{Asymptotic properties in the shifted family of a numerical semigroup with few generators}, Semigroup Forum {\bf 93} (2016), 225--246.

\bibitem{Strazzanti-thesis} F.~Strazzanti, \textit{A family of quotients of the Rees algebra and rigidity properties of
local cohomology modules}, Ph.D. Thesis, University of Pisa, 2016.

\bibitem{Sturmfels} B.~Sturmfels, \textit{Gr\"obner Bases and Convex Polytopes}, University Lecture Series {\bf 8}, American Mathematical Society, 1996.

\bibitem{Sahin}  M.~\c{S}ahin,  \textit{Extensions of Toric Varieties}, The Electronic Journal of Combinatorics  {\bf 18} (2011),  Paper P93.

\bibitem{SahinSahin} M.~\c{S}ahin, N.~\c{S}ahin, \textit{On Pseudo Symmetric Monomial Curves},   Comm. Alg. {\bf 46} (2018), 2561--2573.
%Preprint 2016,  arXiv:1507.01288v2 [math.AC], 18 pp.

\bibitem{Tripathi} A.~Tripathi, \textit{On a variation of the coin exchange problem for arithmetic progressions},  Integers {\bf 3} (2003), A1, 5 pp.

\bibitem{Vu} T.~Vu, \textit{Periodicity of Betti numbers of monomial curves},  J. Algebra {\bf 418} (2014), 66--90.  

\bibitem{Watanabe} K.~Watanabe, \textit{Some examples of one dimensional Gorenstein domains}, Nagoya Math. J. {\bf 49} (1973), 101--109.

\bibitem{Weibel} Ch.~A.~Weibel, \textit{An introduction to homological algebra}, Cambridge Stud. Adv. Math., vol. 38, Cambridge University Press, Cambridge, 1994.

\end{thebibliography}
\end{document}